\documentclass[12pt,reqno]{amsart}
\usepackage[T1]{fontenc}   
\usepackage{comment}

\usepackage{amsmath,amsfonts,amsthm,mathrsfs,amssymb,amscd,enumerate,url,tikz-cd, enumitem}

\input{xypic}
\xyoption{all}

\usepackage{xcolor}
\colorlet{mdtRed}{red!50!black}
\definecolor{dblue}{rgb}{0,0,.6}
\usepackage[colorlinks]{hyperref}
\hypersetup{linkcolor=blue,citecolor=dblue,filecolor=dullmagenta,urlcolor=mdtRed}

\newtheorem{theorem}{Theorem}[section]
\newtheorem{proposition}[theorem]{Proposition}
\newtheorem{lemma}[theorem]{Lemma}
\newtheorem{corollary}[theorem]{Corollary}
\numberwithin{equation}{theorem}

\theoremstyle{definition}
\newtheorem{definition}[theorem]{Definition}

\newtheorem{rmk}[theorem]{Remark}
\newtheorem{example}[theorem]{Example}

\newcommand{\mb}[1]{\mathbb{#1}}
\newcommand{\mc}[1]{\mathcal{#1}}
\newcommand{\F}{\mathbb{F}}
\newcommand{\mL}{\mathcal{L}}
\newcommand{\mO}{\mathcal{O}}
\newcommand{\rar}{\rightarrow}
\renewcommand{\t}[1]{\widetilde{#1}}
\newcommand{\Con}{\operatorname{Con}}
\newcommand{\Div}{\operatorname{Div}}

\title{Equality and iso-duality of Generalized Geometric Goppa codes}
\author{Jyoti Dasgupta}
\author{Nupur Patanker}

\begin{document}

\begin{abstract}
Geometric Goppa codes are constructed by evaluating elements of an algebraic function field at the places of degree one. Extending this, generalized geometric Goppa codes were defined in \cite{base} and \cite{GGG} by allowing evaluation at places of arbitrary degree. In this paper, we establish necessary and sufficient conditions for equality of generalized geometric Goppa codes, following the approach of \cite{MP}. These conditions serve as a key tool in our characterization of iso-dual and self-dual generalized geometric Goppa codes. Given a separable extension $M/F$ of algebraic function fields and an iso-dual generalized geometric Goppa code over $F$, we extend the construction of \cite{CPQT} to obtain iso-dual generalized geometric Goppa codes over $M$. Furthermore, we study self-duality of generalized AG codes (\cite{base}, \cite{DM}).
\end{abstract}

\maketitle

\section{Introduction}
Let $\mathbb{F}_q$ denote a finite field with $q$ elements. An $[n,k,d]$-code over $\mathbb{F}_q$ is defined as a $k$-dimensional subspace of $\mathbb{F}_q^n$ over $\mathbb{F}_q$ with minimum distance $d$. Geometric Goppa codes form a class of linear codes constructed using tools from algebraic geometry. In this construction introduced by Goppa \cite{Goppa}, one start with an algebraic function field $F/\mathbb{F}_q$, rational places $P_1, \dots, P_n$ and a divisor $G$ of  $F/\mathbb{F}_q$ with support disjoint from $D=P_1+ \dots+P_n$. Then the geometric Goppa code is obtained by evaluating elements of the Riemann-Roch space $\mathcal{L}(G)$ at the places $P_1,\dots,P_n$. Xing, Niederreiter and Lam in \cite{base} generalized this construction of geometric Goppa codes by considering places of higher degree and called this broader class of linear codes as Generalized AG codes. The construction is given as follows:\par
 Let $P_1, \dots, P_s$ be distinct places of $F$ such that $\deg~P_i=k_i$ for $i=1,2,\dots,s$. Let $G$ be a divisor of $F$ whose support is disjoint from $D:=P_1+\dots,+P_s$. For $i=1,\dots,s$, $\mathcal{O}_{P_i}$ denotes the valuation ring of $P_i$ and for $x \in \mathcal{O}_{P_i}$, $x(P_i)$ denotes the residue of $x$ in $\mathcal{O}_{P_i}/P_i$. For each $i=1,\dots,s$, a linear $[n_i,k_i,d_i]$-code $C_i$ over $\mathbb{F}_q$ is chosen. Let $\pi_i: \mathcal{O}_{P_i}/P_i \rightarrow C_i$ be a chosen $\mathbb{F}_q$-linear isomorphism, $i=1,\dots,s$. Then the generalized Geometric AG code is defined as the image of the map
$$
    \alpha: \mathcal{L}(G) \xrightarrow{ev} \prod_{i=1}^s \mathcal{O}_{P_i}/P_i \xrightarrow{\prod_{i=1}^s \pi_i} \prod_{i=1}^s C_i,~x \mapsto (x(P_i))_{i=1}^s \mapsto (\pi_i(x(P_i)))_{i=1}^s.
$$
It is denoted by $C_{\mathcal{L}}(P_1,\dots,P_s;G;C_1,\dots,C_s)$. The authors in \cite{base} also gave examples of many good linear codes obtained from this construction. In \cite{GGG}, the author considered the image of the first map $ev$ and called it Generalized Geometric Goppa code denoted by $C_{\mathcal{L}}(D,G)$.\par 
Since then, many researchers have studied different properties of these codes. Duality of GGG codes and GAG codes have been studied in \cite{GGG} and \cite{DM}. Iso-duality of GAG codes has been studied in \cite{isodualgag}. The relation between AG codes and GAG codes has been studied in \cite{AGGAG}. Automorphisms of GAG codes have been studied in \cite{automorphism}, \cite{automorphism1}, \cite{automorphism2}, \cite{automorphism3}, etc. Other works on GAG codes can be found in \cite{decode}, \cite{decode1}, \cite{decode2}, \cite{decode3}, \cite{locality}, \cite{asymptotic},  \cite{Roch}, \cite{mindis}, \cite{mindis1}, \cite{GAG1}, \cite{GAG2}, \cite{convolutional}, \cite{binary}, \cite{evaluation}, \cite{ff}, etc.

For a linear code $C \subseteq \mathbb{F}_q^n$ and an inner product on $\mathbb{F}_q^n$, the orthogonal complement $C^{\perp}$ of $C$ with respect to the inner product is called the dual code of $C$. If $C=C^{\perp}$, then $C$ is called self-dual code. More generally, if $C$ is equivalent to $C^{\perp}$, then we call it iso-dual code. Iso-dual AG codes have been studied in \cite{isodual}, \cite{isodual1}, \cite{isodual2}, \cite{isodual3}, etc. Given a finite separable extension $M/F$ and an iso-dual geometric Goppa code over $F$, the authors in \cite{CPQT} constructed iso-dual geometric Goppa codes over $M$ under certain conditions. In this paper, we study the equality and iso-duality of generalized geometric Goppa codes. We mimic the approach of \cite{CPQT} to construct iso-dual generalized geometric Goppa codes over function field extensions.\par
This paper is organised as follows. Section $2$ recalls the basics of algebraic function fields and generalized geometric Goppa codes. Section $3$ examines the conditions under which two generalized geometric Goppa codes are equal using results from \cite{MP}. Section $4$ extends the results on iso-duality of generalized geometric Goppa codes and establishes the necessary and sufficient conditions for a generalized geometric Goppa code to be iso-dual. Section $5$ deals with the construction of iso-dual generalized geometric Goppa codes over a finite separable extension $M/F$ of algebraic function fields using iso-dual generalized geometric Goppa codes over $F$. Section $6$ provides a few examples of the construction of Section $5$. Section $7$ deals with the condition for self-duality of  generalized AG codes.

\section{Preliminaries}
In this section, we recall in brief some basic concepts from algebraic function field and generalized geometric Goppa codes. For more details, refer to \cite{St}, \cite{DM}.\par

\subsection{Algebraic function fields}
Let $K=\F_q$ be the base field and $F/ K$ be an algebraic function field. We denote the set of places of $F$ by $\mathbb{P}(F)$. For a place $P \in \mathbb{P}(F)$, let $v_P$ denote the discrete valuation associated with $P$, $\mathcal{O}_{P}$ denote the valuation ring of $P$ and $F_P$ denote the residue class field $\mathcal{O}_{P}/P$. For $x \in \mathcal{O}_{P}$, the residue of $x$ in $F_{P}$ is denoted by $x(P)$. We denote the group of divisors of $F$ by $\Div (F)$. For a rational function $x \in F$, the associated principal divisor is denoted by $(x)$.  We have the following equivalence relations on $\Div (F)$.
\begin{definition}
    Two divisors $G$ and $H$ in $\Div F$ are said to be (rationally) equivalent if there exists
a rational function $u\in F$ such that $H = G + (u )$. We denote this by $G \sim H$.
The divisors $G$ and $H$ in $\Div F$ are said to be (rationally) equivalent with respect to a divisor $D$ if, moreover,
$u(P) = 1$, for all $P \in \textrm{Supp }(D)$. We denote this by $G \sim_D H$ (see \cite[Definition 2.15]{MP}).
\end{definition}
 The divisor class group of $F$ is denoted by $\textrm{Cl}(F)$ and defined as $\Div(F)$ modulo the equivalence relation $\sim$.

\subsubsection{Extensions of function fields}
Let $F'/K'$ be a finite extension of $F/K$. For a place
$P \in \mb{P}_F$, its \emph{conorm} (with respect to $F'/F$) is defined as
\begin{equation}\label{eq:place_conorm}
    \Con_{F'/F}(P) := \sum_{P' \mid P} e(P'|P)  P',
\end{equation}
where the sum runs over all places $P' \in \mb{P}_{F'}$ lying over $P$ and $e(P'|P)$ is the ramification index of $P'$ over $P$. The conorm map
is extended to a homomorphism from $\Div(F)$ to $\Div(F')$ by linearity.
If $h \in F$, then we have that 
\begin{equation}\label{eq:prin_conorm}
\Con_{F'/F} \left((h)^F \right)=(h)^{F'},
\end{equation}
where $(h)^F$ denotes the principal divisor of $h$ in $F$, and similarly, $(h)^{F'}$ denote the principal divisor of $h$ in $F'$. Thus, the conorm map induces a map of the divisor class groups
\begin{equation}\label{eq:cl_conorm}
\Con_{F'/F}: \textrm{Cl}(F) \rar \textrm{Cl}(F').
\end{equation}
For a divisor $A \in \Div(F)$, let us denote its image in $\Div(F')$ by $\t{A}:=\Con_{F'/F}(A)$. Then, by \cite[Corollary 3.1.14]{St}, we know that 
\begin{equation}\label{eq:degree}
     \deg \t{A}=\frac{[F':F]}{[K':K]}\deg A.
\end{equation}
There is also a natural divisor associated with the extension, defined by
\begin{equation}\label{eq:different}
  \textrm{Diff}(F'/F) := \sum_{P \in \mb{P}_F}\sum_{P' \mid P} d(P'|P)  P',  
\end{equation}
called the different of $F'/F$, where $d(P'|P)$ is the different exponent of $P'$ over $P$.
Let $\Omega_F$ denote the module of Weil differentials of $F$ over $K$.  Let $(\omega) \in \Div F$ be the divisor associated with a Weil differential $\omega \in \Omega_F$. There is a well-defined map $\textrm{Cotr}_{F'/F}: \Omega_F \rar \Omega_{F'}$ from the module of Weil differentials of $F$ over $K$ to the module of Weil differentials of $F'$ over $K'$, such that
\begin{equation}\label{eq:cotrace}
    (\textrm{Cotr}_{F'/F}(\omega)) = \Con_{F'/F}((\omega)) + \textrm{Diff}(F '/F).
\end{equation}

\subsection{Generalized geometric Goppa codes}
In this subsection, we recall the construction of generalized geometric Goppa codes and some relevant results.

Let $P_1, \dots, P_s$ be distinct places of $F$ such that $\deg P_i=k_i$ for $i=1,2,\dots,s$. Let $\mathcal{V}$ be the $\mathbb{F}_q$-vector space $\prod_{i=1}^s F_{P_i}$ with dimension $ \sum_{i=1}^s k_i$. 
Consider the divisor $D:=P_1+\dots+P_s$ and another divisor $G$ of $F$ whose support is disjoint from $D$. The Riemann-Roch space associated to $G$ is $\mL(G)=\{x \in F~|~ (x) +G \geq 0\}$. We denote by $l(G)$ the dimension of $\mL(G)$ as an $\F_q$-vector space. Recall that the generalized geometric Goppa code is the image of the evaluation map
\begin{equation} \label{GGG_1}
 ev: \mathcal{L}(G) \rightarrow \mathcal{V},~x \mapsto (x(P_i))_{i=1}^s.   
\end{equation}
For a place $P \in \mb{P}(F)$ and a Weil differential $\omega \in \Omega_F$, denote the residue of $\omega$ at $P$ by $\textrm{res}_P(\omega)$. In \cite{DM}, the authors have also defined generalized geometric Goppa codes using differentials. We recall the construction now. Given a divisor $A \in \Div F$, we have the set of Weil differentials $\Omega(A)=\{\omega \in \Omega ~|~ (\omega) \geq A\}$. 
Consider the map 
$$\textrm{rs}: \Omega(G-D) \rightarrow \mathcal{V}, \omega \mapsto (\textrm{res}_{P_i}(\omega))_{i=1}^s.$$
As defined in \cite[Proposition 2]{DM}, we have the following inner product on $\mathcal{V}$: for $x=(x_1,\dots,x_s)$, $y=(y_1,\dots,y_s) \in \mathcal{V}$, 
\begin{equation} \label{eq_inn}
    \langle x,y \rangle:=\sum_{i=1}^s \textrm{Tr}_{F_{P_i}/ \mathbb{F}_q}(x_iy_i).
\end{equation}
For a linear code $C \subseteq \mathcal{V}$, let $C^{\perp}$ denote the dual code of $C$ with respect to the inner product (\ref{eq_inn}). Then from \cite[Proposition $3$]{DM}, we have $\dim~C+\dim~C^{\perp}=\dim~\mathcal{V}$ and $C^{\perp \perp}=C$.\par
Moreover, $C$ is said to be iso-dual if there exists $x =(x_1, \ldots, x_s) \in \mc{V}$  where $x_i \neq 0~\forall~ 1 \leq i \leq s$ such that $ C^\perp = x \cdot C$ and is said to be self-dual if $C=C^\perp$.\\

We recall the following results from \cite{DM}, which will be used later. 
\begin{theorem}\label{th:differential} Using the inner product defined in (\ref{eq_inn}), the following hold:
    \begin{enumerate}
\item \cite[Theorem $4$]{DM} $C_{\Omega}(G,D)=C_{\mathcal{L}}(G,D)^{\perp}.$
        \item \cite[Theorem $5$]{DM} Let $\eta$ be a differential of $F$ with $v_{P_i}((\eta))=-1$ $(i=1,\dots,s)$. Then $$C_{\Omega}(D,G)=a \cdot C_{\mathcal{L}}(D,D-G+(\eta))$$
        with $a=(\textrm{res}_{P_1}(\eta), \dots, \textrm{res}_{P_s}(\eta))$.
        \item \cite[Corollary $1$]{DM} There exists a differential $\eta$ with $v_{P_i}((\eta))=-1$ and $res_{P_i}(\eta)=1$ for $i=1,\dots,s$  such that 
        $$C_{\Omega}(D,G)=C_{\mathcal{L}}(D,D-G+(\eta)).$$
    \end{enumerate}
\end{theorem}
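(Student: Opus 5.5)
The plan proves the three parts in order, since (3) is a consequence of (2) and (2) is essentially a reformulation of (1) using the canonical class.

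For part (1), I will first show $C_{\Omega}(D,G)\subseteq C_{\mathcal{L}}(D,G)^{\perp}$ and then compare dimensions. Take $x\in\mathcal{L}(G)$ and $\omega\in\Omega(G-D)$. Then $x\omega$ is a Weil differential with $(x\omega)\ge -D$. It can therefore have poles only at $P_1,\dots,P_s$, and these poles are at most simple. By the residue theorem, $\sum_P \mathrm{res}_P(x\omega)=0$, where the residue is taken to $\mathbb{F}_q$, i.e. it already includes the trace from $F_P$. At a place $P_i$ where $x$ is regular and $\omega$ has at most a simple pole, the $F_{P_i}$-valued local residue satisfies $\mathrm{res}_{P_i}(x\omega)=x(P_i)\,\mathrm{res}_{P_i}(\omega)$. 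Applying $\mathrm{Tr}_{F_{P_i}/\mathbb{F}_q}$ and summing over $i$ gives exactly $\langle ev(x),\mathrm{rs}(\omega)\rangle=0$ for the inner product (\ref{eq_inn}).

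For the dimension count, $\ker(ev)=\mathcal{L}(G-D)$ and $\ker(\mathrm{rs})=\Omega(G)$. Riemann--Roch gives $\dim\Omega(A)=\ell(W-A)$ for a canonical divisor $W$. Combining these, $\dim C_{\mathcal{L}}+\dim C_{\Omega}=\ell(G)-\ell(G-D)+\ell(W-G+D)-\ell(W-G)$. Applying Riemann--Roch to $G$ and to $G-D$ collapses this to $\deg D=\sum k_i=\dim\mathcal{V}$. Together with $\dim C+\dim C^{\perp}=\dim\mathcal{V}$, this yields equality. The main obstacle here is identifying the local residue of $x\omega$ at a higher-degree place as $x(P_i)\,\mathrm{res}_{P_i}(\omega)$ before the trace is applied. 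I will handle it using the local expansion $\omega=(\sum_j a_j t^j)\,dt$ in the completion, which is valid because $F_{P_i}$ embeds in $\hat{\mathcal{O}}_{P_i}$ in the separable (finite field) case.

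For part (2), I will use the isomorphism $\mathcal{L}(D-G+(\eta))\to\Omega(G-D)$, $x\mapsto x\eta$. Indeed, $(x\eta)=(x)+(\eta)\ge G-D$ if and only if $x\in\mathcal{L}((\eta)-G+D)$. Any $x$ in this space satisfies $v_{P_i}(x)\ge -v_{P_i}(\eta)+1-0=0$, since $v_{P_i}(\eta)=-1$ and $P_i\notin\mathrm{Supp}\,G$. So $x$ is regular at each $P_i$, and by the local computation above, $\mathrm{res}_{P_i}(x\eta)=x(P_i)\,\mathrm{res}_{P_i}(\eta)$. This gives $\mathrm{rs}(x\eta)=a\cdot ev(x)$. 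One caveat is that the support of $D-G+(\eta)$ may meet $D$. However, the evaluation still makes sense because $x$ is regular at each $P_i$, and I will note this caveat explicitly.

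For part (3), I will use weak approximation and the strong approximation theorem to choose a differential $\eta$ with $v_{P_i}(\eta)=-1$ and $\mathrm{res}_{P_i}(\eta)=1$ for every $i$. Concretely, take any nonzero $\omega$ and a prime element $t_i$ at each $P_i$. Then choose $u\in F$ such that $u\omega$ locally equals $c_i\,t_i^{-1}\,dt_i$ plus higher-order terms with $c_i\in F_{P_i}$. Here $c_i$ must be chosen so that its residue is $1$, i.e. the local residue in $F_{P_i}$ is $1$. This is possible because approximation allows prescribing finitely many Laurent coefficients of $u$ at finitely many places. Part (2) then applies with $a=(1,\dots,1)$.
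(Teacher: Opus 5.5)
The paper gives no proof of this statement. All three parts are quoted from \cite{DM} (Theorems 4 and 5 and Corollary 1), so there is no internal argument to compare against. Your proposal is the natural self-contained proof, and it is correct. It is the higher-degree analogue of the classical arguments for rational places: the residue theorem plus Riemann--Roch for (1), the bijection $x\mapsto x\eta$ from $\mathcal{L}(D-G+(\eta))$ onto $\Omega(G-D)$ for (2), and approximation for (3).

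Compared with the bare citation, your argument shows exactly where places of higher degree enter. The $\mathbb{F}_q$-valued residue in the residue theorem must equal $\mathrm{Tr}_{F_{P_i}/\mathbb{F}_q}$ of the $F_{P_i}$-valued residue used in $\mathrm{rs}$. The latter must satisfy $\mathrm{res}_{P_i}(x\omega)=x(P_i)\,\mathrm{res}_{P_i}(\omega)$ whenever $v_{P_i}(x)\ge 0$ and $v_{P_i}(\omega)\ge -1$. This is precisely why the trace form (\ref{eq_inn}) is the right pairing.

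You assert the trace compatibility rather than prove it. It is a standard but nontrivial fact about residues computed in $F_{P_i}((t))$. If you want to avoid completions, define the residue intrinsically. For $v_{P_i}(\omega)\ge -1$, the local component $\omega_{P_i}$ vanishes on $P_i$, so it induces an $\mathbb{F}_q$-linear functional on $F_{P_i}$. By nondegeneracy of the trace form, this functional is $y\mapsto \mathrm{Tr}_{F_{P_i}/\mathbb{F}_q}(ry)$ for a unique $r=:\mathrm{res}_{P_i}(\omega)$. Linearity is then built in. Orthogonality reduces to $0=\omega(x)=\sum_i \omega_{P_i}(x)$, since the other local components vanish on $x$ because $(x)+(\omega)\ge -D$. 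You would still need to check that this agrees with the residue used in \cite{DM}.

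There are three small slips.

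\begin{itemize}
\item In (2), the bound should read $v_{P_i}(x)\ge -v_{P_i}(D-G+(\eta))=-(1-0-1)=0$. Your expression $-v_{P_i}(\eta)+1-0$ evaluates to $2$.
\item Your caveat in (2) is unfounded. Since $v_{P_i}(D-G+(\eta))=0$ for every $i$, the support of $D-G+(\eta)$ is automatically disjoint from $D$. So $C_{\mathcal{L}}(D,D-G+(\eta))$ is an honest generalized geometric Goppa code.
\item In (3), weak approximation alone suffices, and no bookkeeping of Laurent coefficients is needed. Write $\omega=f_i\,dt_i$ with $t_i$ a prime element at $P_i$ and $n_i=v_{P_i}(f_i)$. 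Choose $u\in F$ with $v_{P_i}\bigl(u-(t_if_i)^{-1}\bigr)\ge -n_i$ for all $i$. Then $u\omega-dt_i/t_i$ is regular at $P_i$, so $\eta=u\omega$ has $v_{P_i}(\eta)=-1$ and $\mathrm{res}_{P_i}(\eta)=1$.
\end{itemize}
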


 \section{Equality of generalized geometric Goppa codes}

In this section, we characterize divisors giving the same generalized geometric Goppa codes,  extending \cite[Theorem 4.14]{MP}.  


We follow the notations introduced before. Let the base field be denoted by $K=\mathbb{F}_q$. Since $F/K$ is an algebraic function field, there exists an element $x$ of $F$, transcendental over $K$, such that $F$ is a finite extension of $K(x)$. We may assume that the fields $F_{P_i}$ for all $i=1, \ldots, s$ and $F$ are contained in some common field extension of $K$ (e.g. consider the algebraic closure of $K(x)$). Let $K'$ be the compositum of the fields $F_{P_i}$ for all $i=1, \ldots, s$. Then $K'$ is a finite extension of $K$.
Thus, the compositum $F'=FK'$ is a function field with the full constant field $K'$ (see \cite[Proposition 3.6.1]{St}). Then, by \cite[Theorem 3.6.3]{St}, we know that $F'/F$ is unramified, which means that $\Con_{F'/F}(P) = \sum_{P' \mid P} P'$ (see \eqref{eq:place_conorm}). Furthermore, for any divisor $A \in \Div(F)$, by \eqref{eq:degree}, we have $\deg A= \deg \t{A}$.

The upshot of moving to the extension field $F'$ is the following lemma.
\begin{lemma}\label{lem:lift_D}
    $\t{D}=\Con_{F'/F}(D)=\sum_{i=1}^s\sum_{P' \mid P_i} P'$ is a divisor in $F'$ with all rational places.
\end{lemma}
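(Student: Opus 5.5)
We need to show every place $P'$ of $F'$ lying over some $P_i$ has degree one over $K'$. The plan is to combine the fundamental equality for the constant field extension $F'=FK'$ with the degree formula for places in constant field extensions, namely \cite[Theorem 3.6.3]{St}. The equality $\t{D}=\sum_i\sum_{P'\mid P_i}P'$ itself is just the definition of the conorm together with the unramifiedness of $F'/F$ noted above, so the content is only the rationality claim.

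First fix $i$ and set $k_i=\deg P_i=[F_{P_i}:K]$. By \cite[Theorem 3.6.3]{St}, for a place $P'\mid P_i$ of $F'=FK'$ we have
\[
\deg P' = \frac{\deg P_i}{\gcd(\deg P_i,[K':K])}\quad\text{(degree over $K'$)},
\]
and the number of places of $F'$ above $P_i$ equals $\gcd(\deg P_i,[K':K])$. Since $K$ is finite, $F_{P_i}\cong \F_{q^{k_i}}$, and $K'$, being the compositum of the $F_{P_i}$ inside a common field, is $\F_{q^m}$ with $m=\operatorname{lcm}(k_1,\dots,k_s)$. Hence $k_i\mid m=[K':K]$, so $\gcd(k_i,[K':K])=k_i$ and every $P'\mid P_i$ has degree $1$ over $K'$. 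There are exactly $k_i$ such places, which is consistent with $\deg\t{D}=\deg D$ from \eqref{eq:degree}.

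If one prefers to avoid quoting the degree formula, an alternative is to argue directly via residue fields. The residue field $F'_{P'}$ contains both $F_{P_i}$ and $K'$. It is generated over $K'$ by $F_{P_i}$, because $F'=FK'$ and the residue field of a constant field extension is the compositum of residue fields \cite[Theorem 3.6.3(f)]{St}. Since $F_{P_i}\subseteq K'$ (up to the chosen embeddings), we get $F'_{P'}=K'$, i.e.\ $\deg P'=1$.

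The main subtlety is this identification: that $F_{P_i}$, embedded in the common overfield, genuinely lies inside $K'$ and is compatible with the residue map at $P'$. With finite fields this is automatic, since a finite extension of $\F_q$ of degree dividing $m$ embeds uniquely into $\F_{q^m}$. So the cleanest route is the gcd formula, which only uses $k_i\mid[K':K]$.
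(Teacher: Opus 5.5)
Your main argument is the paper's proof: since $k_i=[F_{P_i}:K]$ divides $[K':K]$, the constant-field-extension formula $\deg P'=k_i/\gcd(k_i,[K':K])$ gives $\deg P'=1$ for every $P'\mid P_i$. The only slip is in the citations. The paper quotes the gcd formula from \cite[Theorem 1.7.2(b)]{NX}; it is a finite-field statement and is not one of the items of \cite[Theorem 3.6.3]{St}. Likewise, item (f) of that theorem is the injectivity on divisor class groups used in Lemma \ref{equi}, not the residue-field compositum fact your alternative argument relies on.
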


\begin{proof}
    By definition, $k_i=[F_{P_i}:K]$ divides $[K':K]$ for all $i=1, \dots, s$. If $P' \in \mb{P}(F')$ lies over $P_i$ for some $i$ between $1$ to $s$, then $\deg P'=\frac{k_i}{\text{gcd}(k_i, [K':K])}=1$ (see \cite[Theorem 1.7.2(b)]{NX}). This finishes the proof.
\end{proof}

The following result is one of the key ingredients in our proof of Theorem \ref{th:equality}. We include the proof for completeness.
\begin{lemma}\label{equi}
     If $G, H, D$ are divisors in $F$ such that $\t{G}, \, \t{H}$ are rationally equivalent with respect to $\t{D}$, then $G, \, H$ are rationally equivalent with respect to $D$. 
\end{lemma}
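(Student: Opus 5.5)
The plan is to descend the function $u$ from $F'$ to $F$ by a Galois argument. We use that $F'=FK'$ is a constant field extension with the same constants structure as $K'/K$. By hypothesis there is $u\in F'$ with $\t{G}=\t{H}+(u)^{F'}$ and $u(P')=1$ for every place $P'$ of $F'$ lying over some $P_i$. We want to show that $u$ actually lies in $F$. Then \eqref{eq:prin_conorm} gives $\Con_{F'/F}(G-H-(u)^F)=0$. The conorm map is injective on divisors, since each $P\in\mb{P}(F)$ has at least one place above it and distinct places of $F$ have disjoint sets of extensions. Hence $G=H+(u)^F$.

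\textbf{Step 1: Galois setup.} Since $K'/K$ is a finite extension of finite fields, it is Galois with cyclic group generated by the Frobenius. As $F'=FK'$ with $F\cap K'=K$ ($K$ being the full constant field of $F$), the extension $F'/F$ is Galois and restriction gives $\mathrm{Gal}(F'/F)\cong \mathrm{Gal}(K'/K)$. The fixed field of this group is $F$.

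\textbf{Step 2: $\sigma(u)/u$ is a constant.} Each $\sigma\in\mathrm{Gal}(F'/F)$ permutes the places of $F'$ above a given place of $F$. Hence every conorm divisor is $\sigma$-invariant, and in particular $(u)^{F'}=\t{G}-\t{H}$ is $\sigma$-invariant. Moreover $(\sigma u)^{F'}=\sigma((u)^{F'})$. It follows that $\sigma(u)/u$ has zero divisor, so it is a constant $c_\sigma\in K'^{*}$.

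\textbf{Step 3: the constant is $1$.} This is the key step, and it uses the normalization $u(P')=1$. Fix a place $P'$ over some $P_i$. Then $\sigma(P')$ also lies over $P_i$, and $(\sigma u)(\sigma P')=\sigma(u(P'))=\sigma(1)=1$. On the other hand $u(\sigma P')=1$ by hypothesis, so $c_\sigma=(\sigma u)(\sigma P')/u(\sigma P')=1$. Thus $u$ is fixed by the whole Galois group, and $u\in F$.

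If $D=0$ there is no place to evaluate at. In that case the map $\sigma\mapsto c_\sigma$ is a $1$-cocycle of $\mathrm{Gal}(K'/K)$ in $K'^*$, so Hilbert 90 gives $c_\sigma=\sigma(w)/w$ for some $w\in K'^*$. We then replace $u$ by $u/w$, which has the same divisor and is Galois-invariant.

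\textbf{Step 4: the residue condition.} For $P'\mid P_i$, the residue field $F_{P_i}$ embeds in $F'_{P'}$, and for $u\in\mO_{P_i}\subseteq\mO_{P'}$ the residue $u(P')$ is the image of $u(P_i)$ under this embedding. Since $u(P')=1$ and the embedding is injective, $u(P_i)=1$ for all $i$. This gives $G\sim_D H$.

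The main obstacle I expect is Step 3: justifying cleanly that $\sigma$ acts compatibly on residues, i.e.\ that $(\sigma u)(\sigma P')=\sigma(u(P'))$ under the identification of residue fields with $K'$. This should follow from $\sigma(\mO_{P'})=\mO_{\sigma P'}$ together with the fact that the residue field $K'$ of $P'$ is a degree-one place of $F'$, so the identification with $K'$ is canonical through the constants.
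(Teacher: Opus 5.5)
Your proof is correct, but it takes a different route from the paper's.

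\textbf{The paper's route.} It first invokes the injectivity of $\mathrm{Cl}(F)\to\mathrm{Cl}(F')$ for constant field extensions \cite[Theorem 3.6.3(f)]{St} to get some $h\in F$ with $G-H=(h)^F$. Comparing divisors then gives $\widetilde h=ch$ with $c\in K'^{*}$. The Galois action is used only to show that the single constant $c^{-1}=h(P')$ is fixed by $\mathrm{Gal}(K'/K)$. For this the paper reads $h(P')$ as an element of $K'$, using that $P'$ is rational (Lemma~\ref{lem:lift_D}).

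\textbf{Your route.} You descend the given function $u$ directly. Invariance of $(u)^{F'}$ makes $\sigma\mapsto c_\sigma=\sigma(u)/u$ take values in $K'^{*}$, and the normalization on $\operatorname{Supp}(\widetilde D)$ forces $c_\sigma=1$. This is self-contained: it needs neither the class-group injectivity nor the rationality of $P'$. The content of the cited injectivity resurfaces only in your Hilbert~90 remark for $D=0$, which in effect reproves it for finite constant extensions. In the paper, the case $D=0$ is silently covered by that citation.

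The obstacle you anticipate in Step~3 is not real. From $u-1\in P'$ you get $\sigma(u)-1\in\sigma(P')$, and $u-1\in\sigma(P')$ by hypothesis. Hence $(c_\sigma-1)u\in\sigma(P')$, and $u$ is a unit at $\sigma(P')$. So $c_\sigma-1\in\sigma(P')\cap K'=\{0\}$, with no identification of residue fields with $K'$ needed.

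In Step~4, first record that $u\in\mathcal{O}_{P'}\cap F=\mathcal{O}_{P_i}$ and $u-1\in P'\cap F=P_i$. This gives $u(P_i)=1$ directly.
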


\begin{proof}
    Since $\t{G}, \, \t{H}$ are rationally equivalent with respect to $\t{D}$, there exists $\t{h} \in F'$ such that 
    \begin{equation}\label{values}
        \t{G}-\t{H}=(\t{h})^{F'} ~\textrm{and} ~\t{h}(P')=1 \textrm{ for all }P' \in \textrm{Supp } (\t{D}). 
    \end{equation}
Furthermore, since the induced conorm map (see \eqref{eq:cl_conorm}) of the divisor class groups
\(\Con_{F'/F}: \textrm{Cl}(F) \rar \textrm{Cl}(F')\) is injective (by \cite[Theorem 3.6.3 (f)]{St}), it follows that $G$ is rationally equivalent to $H$. Hence, there is $h \in F$ such that
\begin{equation*}
        \begin{split}
            & G-H= (h)^F\\
             \Rightarrow & \t{G}-\t{H}=\textrm{Con}_{F'/F}((h)^F)\\
             \Rightarrow &(\t{h})^{F'}=(h)^{F'} ~ (\textrm{by \eqref{values}}).
        \end{split}
    \end{equation*}
This implies that there exists a nonzero element $c \in K'$ such that 
\begin{equation*}
    \t{h}=ch.
\end{equation*}
We want to show that $c \in K$. From \eqref{values}, we have that 
\begin{equation}\label{eq c}
    c^{-1}=h(P') \in F_{P'}
\end{equation}
 for all extensions $P'$ of $P \in \textrm{Supp}~(D)$.\par 

Next, note that $K'/K$ is a finite separable extension, hence a Galois extension. This implies that $F'/F$ is also Galois and the map $$\textrm{Gal}(F'/F) \rightarrow \textrm{Gal}(K'/K),$$ which maps $\sigma$ to its restriction $\sigma|_{K'}$, is an isomorphism. Thus, for $\rho \in \textrm{Gal}(K'/K)$, there exists a unique $\t{\rho} \in \textrm{Gal}(F'/F)$ such that $\t{\rho}|_{K'}=\rho$. Fix $P \in \textrm{Supp}(D)$ and $P'$ be an extension of $P$ in $F'$. Since $\t{\rho}$ acts transitively on the set of extensions of $P$, we see that $\t{\rho}(P') \in \textrm{Supp}~(\t{D})$. Thus, by \cite[Lemma 3.5.2]{St} and \eqref{eq c}, we get
$$\rho(c^{-1})=\t{\rho}(h(P'))=\t{\rho}(h)(\t{\rho}(P'))=h(\t{\rho}(P'))=c^{-1}.$$
This shows that $c \in K$ and hence $\t{h}=ch \in F$. This implies that $\t{h}(P)=\t{h}(P')=1$ and $G-H=(\t{h})^F$. Thus, $G$ and $H$ are equivalent with respect to $D$.
\end{proof}

The following lemma will be used later.
\begin{lemma}\label{lem:Conorm}
Let $P$ be a rational place in $\mb{P}(F)$ such that $\Con_{F'/F}(P)$ is rationally equivalent to $P'$, a rational place in $\mb{P}(F')$. If genus of $F$ (and hence of $F'$) is greater than zero, then $\Con_{F'/F}(P)=P'$. 
\end{lemma}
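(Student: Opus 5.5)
Since $F'/F$ is a constant field extension, it is unramified, and $\deg \Con_{F'/F}(P)=\deg P=1$ by \eqref{eq:degree}. A degree-one divisor of the form $\sum_{P''\mid P}P''$ with positive coefficients must consist of a single place $P''$ of degree one. So the first step is to record that $\Con_{F'/F}(P)=P''$ for a unique rational place $P''$ of $F'$ lying over $P$. This uses only \cite[Theorem 3.6.3]{St}: a rational place of $F$ stays inert of degree one in a constant field extension, with residue field $K'$.

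The hypothesis then says $P''\sim P'$ in $F'$, with both places rational. Suppose $P''\neq P'$. Then there is $z\in F'$ with $(z)^{F'}=P''-P'$. Hence $z\in\mL(P')$ is nonconstant, and $l(P')\geq 2$. I would then use the standard fact (\cite[Proposition 1.6.6]{St}, or its consequence) that a function field having a rational place $Q$ with $l(Q)\ge 2$ is rational. Concretely, $z$ has pole divisor exactly $P'$, so $[F':K'(z)]=\deg (z)_\infty=1$ by \cite[Theorem 1.4.11]{St}. Thus $F'=K'(z)$ has genus zero.

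Finally, the genus is preserved under constant field extensions (\cite[Theorem 3.6.3]{St}), so $g(F)=g(F')=0$. This contradicts the hypothesis $g(F)>0$, and therefore $P''=P'$, i.e.\ $\Con_{F'/F}(P)=P'$.

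The only mildly delicate point is the first step: identifying the conorm of a rational place as a single rational place. The subtlety is that the hypothesis mentions a specific $P'$, so one must keep the place over $P$ distinct from $P'$ until the genus argument forces them to coincide.
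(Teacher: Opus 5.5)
Your proposal is correct and follows essentially the same route as the paper: a degree count identifies $\Con_{F'/F}(P)$ with a single rational place $P''$ lying over $P$, and then $P''\sim P'$ with $P''\neq P'$ is ruled out because $g(F')=g(F)>0$. The only difference is that the paper simply cites the fact that $1$ is a gap number at every rational place of a positive-genus function field, whereas you prove it directly: a function $z$ with $(z)^{F'}=P''-P'$ would give $[F':K'(z)]=\deg (z)_\infty=1$, forcing $F'$ to be rational.
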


\begin{proof}
    By definition, $\Con_{F'/F}(P) = \sum_{P'' \mid P} P''$. Since $\deg \Con_{F'/F}(P)= \deg P =1$, we get that $\Con_{F'/F}(P)=P''$ for some rational place $P'' \in \mb{P}(F')$ such that $P''$ lies over $P$. By hypothesis, the genus of $F'$ is greater than zero, which implies that $1$ is a gap number for $F'$. Since the rational place $P''$ is equivalent to $P'$ in $F'$, it follows that $P''=P'$.
\end{proof}

The following theorem is a generalisation of \cite[Theorem 4.14]{MP}.

\begin{theorem}\label{th:equality}
   Let $D=\sum_i^sP_i$ be as before. Suppose $\deg D > 2g + 2$. Let $G$ and $H$ be two divisors of the same
degree $m$ on a curve of genus $g$, with support disjoint from $D$. If $C_{\mL}(D,G) \neq \{\mathbf{0}\}$ and $C_{\mL}(D,G) \neq \mc{V}$, and
$2g - 2 < m < \deg D$, then $C_{\mL}(D,G)=C_{\mL}(D,H)$ if and only if:
\begin{itemize}
    \item[(a)] $G$ and $H$ are equivalent with respect to $D$, or
    \item[(b)] there exist two rational points $P$ and $Q$ such that $G - P$ and $H - Q$ are
    equivalent with respect to $D$ and are canonical divisors, or
    \item[(c)] there exist two rational points $P$ and $Q$ such that $G + P$ and $H + Q$ are
    equivalent with respect to $D$ and are equivalent with $D$, or
    \item[(d)] there exists an $i$, $1 \leq i \leq s$, such that $P_i$ is a rational place and $G - P_i$ and $H - P_i$ are equivalent
    with respect to $D - P_i$ and are canonical divisors, or
    \item[(e)] there exists an $i$, $1 \leq i \leq s$, such that $P_i$ is a rational place and $G + P_i$ and $H + P_i$ are equivalent
    with respect to $D - P_i$ and are equivalent with $D$.
\end{itemize}
\end{theorem}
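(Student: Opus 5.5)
The plan is to reduce to the classical statement \cite[Theorem 4.14]{MP} by base change to $F'=FK'$, where by Lemma \ref{lem:lift_D} the divisor $\t{D}$ consists of rational places only. The first step is to identify $C_{\mL}(D,G)$ with an ordinary geometric Goppa code over $F'$. Since $K'\supseteq F_{P_i}$, we have $F_{P_i}\otimes_K K' \cong \prod_{P'\mid P_i} F'_{P'} = (K')^{k_i}$, because $P_i$ splits into $k_i$ rational places. Also $\mL_{F'}(\t{G}) = \mL_F(G)\otimes_K K'$ by \cite[Theorem 3.6.3]{St}. Together these give
\[
C_{\mL}(\t{D},\t{G}) = C_{\mL}(D,G)\otimes_K K'
\]
inside $\mc{V}\otimes_K K' \cong (K')^{\deg D}$.

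Consequently $C_{\mL}(D,G)=C_{\mL}(D,H)$ holds if and only if $C_{\mL}(\t{D},\t{G})=C_{\mL}(\t{D},\t{H})$. The ``if'' direction needs descent. The isomorphism $\mc{V}\otimes K'\cong (K')^{\deg D}$ is $\mathrm{Gal}(K'/K)$-equivariant, with Galois acting on coefficients on the left. A $K$-subspace $W$ of $\mc{V}$ is recovered from $W\otimes K'$ as its Galois-fixed part, so equal extensions force equal codes.

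Next we check that the hypotheses transfer. The extension is a constant field extension, so $g(F')=g$. Equation \eqref{eq:degree} gives $\deg\t{D}=\deg D>2g+2$ and $\deg\t{G}=\deg\t{H}=m$, and the supports remain disjoint. Nontriviality and properness of the code are preserved by $\otimes K'$. Hence \cite[Theorem 4.14]{MP} applies over $F'$: the codes are equal if and only if one of the conditions (a)--(e) holds for $\t{G},\t{H},\t{D}$, with rational points $P',Q'$ of $F'$ and, in (d)/(e), a rational point $R'$ of $\t{D}$.

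The sufficiency direction is easy. If (a)--(e) hold over $F$, taking conorms gives the corresponding conditions over $F'$. Conorms preserve $\sim_D$ and canonical divisors, since $\textrm{Diff}(F'/F)=0$ by \eqref{eq:cotrace}. A rational place $P_i$ of $F$ stays a single rational place in $\t{D}$, with $\t{D}-\t{P_i}=\widetilde{D-P_i}$.

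The main obstacle is necessity: descending each condition from $F'$ to $F$.
\begin{itemize}
\item[(a)] This follows directly from Lemma \ref{equi}.
\item[(b)] Here $\t{G}-P'$ is canonical on $F'$. Since $\t{G}-\Con(W)$ is principal-equivalent to $P'$ for a canonical divisor $W$ of $F$, the class of $G-W$ lies in the kernel-free image of $\mathrm{Cl}(F)\to\mathrm{Cl}(F')$, by injectivity \cite[Theorem 3.6.3(f)]{St}. So $G-W\sim P$ for some degree-one divisor $P$ of $F$. I would then argue that $P$ can be taken to be a rational place with $\t{P}=P'$. The class $[P']$ is Galois invariant, and when $g>0$ the fact that $1$ is a gap means the rational place $P'$ is the unique effective divisor in its class. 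So $P'$ itself is Galois fixed, lies over a rational place $P$ of $F$, and $\t{P}=P'$, in the spirit of Lemma \ref{lem:Conorm}. The $g=0$ case should be handled separately, where every degree-one class contains a rational place if one exists.
\item[(b), continued] Once $\t{P}=P'$ and $\t{Q}=Q'$, Lemma \ref{equi} applied to $\widetilde{G-P}$ and $\widetilde{H-Q}$ gives $G-P\sim_D H-Q$. Canonicity descends because $\t{W}$ and $\widetilde{G-P}$ are equivalent, and the conorm on classes is injective.
\item[(c)] This is handled the same way, using $\t{D}$ in place of the canonical class.
\item[(d), (e)] The point $R'$ is a rational place of $F'$ lying in $\t{D}$ and fixed by Galois by the same uniqueness argument. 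It therefore lies over some $P_i$ whose conorm is the single place $R'$, which forces $k_i=1$. Lemma \ref{equi} then applies with $D-P_i$.
\end{itemize}

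The Galois-invariance and uniqueness argument that produces rational places of $F$ is the delicate point, especially when $g=0$ or when $[P']$ could contain several rational places. That is where I expect to spend the most care.
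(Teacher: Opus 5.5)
Your proposal is correct. Its architecture (lift to $F'=FK'$, apply \cite[Theorem 4.14]{MP}, descend with Lemma \ref{equi}) is the paper's, but two steps go differently.

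\textbf{Sufficiency.} The paper argues directly over $F$ as in \cite[Example 4.3]{MP}. You instead lift (a)--(e) to $F'$, invoke \cite[Theorem 4.14]{MP} there, and descend via $C_{\mL}(\t{D},\t{G})=C_{\mL}(D,G)\otimes_K K'$. This costs re-checking the hypotheses over $F'$. In exchange it makes rigorous the step ``equal codes over $F$ give equal codes over $F'$'', which the paper states in one line.

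\textbf{Finding $P$ with $\t{P}=P'$.} The paper uses $l(G-W)=l(\t{G}-\t{W})=l(P')>0$ to get an effective degree-one divisor $(v)+G-W=P$ on $F$. It then applies Lemma \ref{lem:Conorm} (``$1$ is a gap'') to upgrade $\t{P}\sim P'$ to $\t{P}=P'$. You instead observe that $[P']=[\Con_{F'/F}(G-W)]$ is $\mathrm{Gal}(F'/F)$-stable, so $\sigma(P')\sim P'$. The gap property then forces $\sigma(P')=P'$, so $P'$ is the unique place over $P:=P'\cap F$. Since $F'/F$ is unramified, $\t{P}=P'$ and $\deg P=1$. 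Both routes use $g>0$ in the same way. Yours names $P$ directly as the restriction of $P'$ and treats (d),(e) (forcing $k_i=1$) uniformly. Your preliminary sentence about injectivity giving ``$G-W\sim P$ for some degree-one divisor'' is vacuous, since $G-W$ already has degree one, and can be dropped.

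\textbf{The case $g=0$.} Your suggested treatment, picking any rational place in the degree-one class, would not work. Such a $P$ need not satisfy $\t{P}=P'$. Then $\t{G}-P'\sim_{\t{D}}\t{H}-Q'$ does not yield $\widetilde{G-P}\sim_{\t{D}}\widetilde{H-Q}$, so Lemma \ref{equi} cannot be applied. No argument is needed, though, because the hypotheses exclude $g=0$ in cases (b)--(e):
\begin{itemize}
\item In (b) and (d), $\deg G=2g-1=-1$, so $l(G)=0$ and $C_{\mL}(D,G)=\{\mathbf{0}\}$.
\item In (c) and (e), $\deg G=\deg D-1$, so $l(G)=\deg D$. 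Since the evaluation map is injective ($\deg G<\deg D$), this gives $C_{\mL}(D,G)=\mc{V}$.
\end{itemize}
This is exactly how the paper disposes of it. Once $g>0$ is established this way, your other worry, that $[P']$ might contain several rational places, also disappears.
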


\begin{proof} It is easy to see that if any one of $(a),~(b),~(c),~(d)$ or ~ $(e)$ is satisfied then $C_{\mL}(D,G)=C_{\mL}(D,H)$ (follows by same arguments as in \cite[Example $4.3$]{MP}). Now we prove the converse direction.\par
Since every basis of $\mL({A})$ is also a basis of $\mL(\t{A})$ for any $A \in \Div(F)$, equality of the codes $C_{\mL}(G,D)=C_{\mL}(H,D)$ will imply that $C_{\mL}(\t{G},\t{D}, K')=C_{\mL}(\t{H},\t{D},K')$. Then by Lemma \ref{lem:lift_D} and \cite[Theorem 4.14]{MP}, we have the following situations.
    \begin{itemize} 
        \item[Case (I):]{\bf $\t{G}, \, \t{H}$ are rationally equivalent with respect to $\t{D}$.} 
        
        This implies that $G, \, H$ are rationally equivalent with respect to $D$ (from Lemma \ref{equi}). This proves (a).

        \item[Case (II):]{\bf There exist two rational points $P'$ and $Q'$ in $\mb{P}(F')$ such that $\t{G} - P'$ and $\t{H} - Q'$ are equivalent with respect to $\t{D}$ and are canonical divisors.}
        
        We will show that there is a rational place $P \in \mb{P}(F)$ such that $\Con_{F'/F}(P)=P'$ in $\Div(F')$. We may choose a canonical divisor $W$ in $\Div(F)$, which implies that $\t{W}$ is a canonical divisor in $\Div(F')$. Since $\t{G}-P'$ is canonical, it is rationally equivalent to $\t{W}$, whence $\t{G}- \t{W}$ is rationally equivalent to $P'$. This means that \[l(G-W, K)= l(\t{G}- \t{W}, K')= l(P',K')>0.\] Thus, there is a rational function $v \in F$ such that $(v) + G -W$ is an effective divisor in $\Div(F)$. Observe that \[\deg ((v) + G -W)= \deg (\t{G}- \t{W})=\deg P'=1.\] Hence, the effective divisor $(v) + G - W$ must be a rational place $P \in \mb{P}(F)$. Then, by definition, we have that \[\Con_{F'/F}(P)= \Con_{F'/F}((v) + G -W)= \Con_{F'/F}((v)) + \t{G}- \t{W} \sim \t{G}- \t{W} \sim P'.\]  
        Now, if $g=0$ then
        \begin{align*}
            & \deg((v) + G -W)=1 \implies \deg(G)-\deg(W)=1 \\
            \implies & \deg(G)+2=1 \implies \deg(G)=-1<0\\
            \implies & l(G)=0 \implies C_{\mL}(D,G)=\{\mathbf{0}\}, \text{ contradiction}
        \end{align*}
        Hence, $g>0$ and by Lemma \ref{lem:Conorm}, we know that $\Con_{F'/F}(P)=P'$. Similarly, we can show that there is a rational place $Q \in \mb{P}(F)$ such that $\Con_{F'/F}(Q)=Q'$. Thus, we now have that the canonical divisors $\t{G} - \Con_{F'/F}(P)$ and $\t{H} - \Con_{F'/F}(Q)$ are equivalent with respect to $\t{D}$, where $P, Q$ are rational places in $\Div(F)$. By Lemma \ref{equi}, we get that $G - P$ and $H - Q$ are equivalent with respect to $D$. Hence, in this case,  the statement (b) holds. 

        \item[Case (III):]{\bf There exist two rational points $P'$ and $Q'$ in $\mb{P}(F')$ such that $\t{G} + P'$ and $\t{H} + Q'$ are equivalent with respect to $\t{D}$ and are equivalent with $\t{D}$.} 

        We have the following
        \begin{equation*}
            \begin{split}
                \t{G} + P' \sim \t{D} &\Rightarrow \Con_{F'/F} (D-G) \sim P'\\
                & \Rightarrow l(D-G, K)= l (\Con_{F'/F} (D-G), K')= l (P',K')>0.
            \end{split}
        \end{equation*}

        As before, there is a rational function $v \in F$ such that $(v) + D -G$ is an effective divisor of degree one in $\Div(F)$, hence given by a rational place $P$ of $F$. Now, we have 
        \begin{equation*}
            \Con_{F'/F}(P)= \Con_{F'/F}((v) + D -G)= \Con_{F'/F}((v)) + \t{D}- \t{G} \sim \t{D}- \t{G} \sim P'.
        \end{equation*}
         Now if $g=0$ then
        \begin{align*}
            & \deg((v) + D -G)=1 \implies \deg(D)- \deg(G)=1 \\
            \implies & \deg(G)= \deg(D)-1 \implies l(G)= \deg(G)+1= \deg(D)\\
            \implies & C_{\mL}(D,G)=\mc{V}, \text{ contradiction}
        \end{align*}
        Hence, $g>0$ and by Lemma \ref{lem:Conorm}, we know that $\Con_{F'/F}(P)=P'$. Similarly, there is a rational place $Q \in \mb{P}(F)$ such that $\Con_{F'/F}(Q)=Q'$. Thus, we now have that $\t{G} +\Con_{F'/F}(P)$ and $\t{H} +\Con_{F'/F}(Q)$ are equivalent with respect to $\t{D}$ and are equivalent with $\t{D}$, where $P, Q$ are rational places in $\Div(F)$. Then, we get (c) by Lemma \ref{equi}.

        \item[Case (IV):]{\bf There exists $P' \in \textrm{Supp }(\t{D})$, such that $\t{G} - P'$ and $\t{H} - P'$ are equivalent
    with respect to $\t{D} - P'$ and are canonical divisors.} 

    Let us write $\t{D}= \sum_{j=1}^t P'_j$, where $\deg P'_j=1$ for all $j=1, \ldots, t$. As in case (a), we choose a canonical divisor $W$ in $\Div(F)$, so that $\t{W}$ is a canonical divisor in $\Div(F')$. Then 
    \[\t{G}- P'_j \sim \t{W} \Rightarrow \t{G} - \t{W} \sim P'_j, \]
    which implies as above that there is a rational function $v \in F$ such that $(v) + G -W$ is an effective divisor of degree one in $\Div(F)$, hence given by a rational place $P$ of $F$. Again, by Lemma \ref{lem:Conorm} and since $g>0$,  we know that $\Con_{F'/F}(P)=P_j'$, which means that $P'_j$ lies over the rational place $P$. However, since $P'_j \in \text{Supp}(\t{D})$, we know that $P'_j$ lies over some place in the support of $D$. By uniqueness, $P$ must belong to the support of $D$ (see \cite[Proposition 3.1.7]{St}), say $P=P_i$ for some $i, \, 1 \leq i \leq \  s$. Thus, we have that $\t{D}- P'_j=\Con_{F'/F}(D-P_i)$. Thus, we get the statement in (d) by Lemma \ref{equi}.

      \item[Case (V):]{\bf There exists a $P' \in \textrm{Supp }(\t{D})$, such that $\t{G} + P'$ and $\t{H} + P'$ are equivalent
    with respect to $\t{D} - P'$ and are equivalent with $\t{D}$.} 

    Following arguments similar to the above, we would get (e).
    \end{itemize}
\end{proof}

 \section{Iso-duality of Generalized Geometric Goppa codes}

In this section, we characterize iso-dual generalized geometric Goppa codes with respect to the inner product on $\mc{V}$ defined in Equation  (\ref{eq_inn}). We follow the approach of \cite{MP} and use Theorem \ref{th:equality} and results from \cite{DM}. This characterization generalizes Proposition 2.6 of \cite{CPQT} (see also \cite{corrig}).

\begin{theorem}\label{th:iso-dual}
    Let $C_{\mL}(D, G)$ be a generalized geometric Goppa code defined over a function field $F$ of genus $g$. Assume that $2g-2< \deg G < \deg D$ and $\deg D > 2g+2$.

\begin{itemize}
    \item[(a)]Assume that, there exists a Weil differential $\eta$ such that $(\eta)=2G - D$ and $v_{P_i}((\eta))=-1$. Then $C_{\mL}(D, G)$ is iso-dual. More precisely,
    \[
    C_{\mL}(D, G)^\perp = x \cdot C_{\mL}(D, G)
    \]
    where $x = (\operatorname{res}_{P_1}(\eta), \ldots, \operatorname{res}_{P_s}(\eta))$.  
    
    \item[(b)] Conversely, if $C_{\mL}(D, G)$ is iso-dual then $2G - D$ is a canonical divisor. In this case, $\deg G= \frac{1}{2}(\deg D+2g-2)$.

    \item[(c)] If $F$ is a rational function field, then $C_{\mL}(D, G)$ is iso-dual if and only if $\deg G= \frac{1}{2}(\deg D-2)$.
\end{itemize}

\end{theorem}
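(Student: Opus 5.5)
For (a), I would apply Theorem~\ref{th:differential} directly. Part (1) gives $C_{\mL}(D,G)^\perp = C_\Omega(D,G)$. The hypothesis $v_{P_i}((\eta))=-1$ lets me invoke part (2), which yields
\[C_\Omega(D,G)=x\cdot C_{\mL}(D,D-G+(\eta)), \qquad x=(\operatorname{res}_{P_i}(\eta))_i.\]
Substituting $(\eta)=2G-D$ gives $D-G+(\eta)=G$. Since $v_{P_i}((\eta))=-1$, each residue is nonzero, so every entry $x_i\neq 0$ and the definition of iso-dual is met. I would also check that the support condition is harmless: $2G-D$ has support in $\mathrm{Supp}(G)\cup\mathrm{Supp}(D)$, and $G$ is disjoint from $D$, so the valuation condition at $P_i$ is consistent with $(\eta)=2G-D$.

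For (b), assume $C_{\mL}(D,G)^\perp=x\cdot C_{\mL}(D,G)$ with all $x_i\neq 0$. First, Theorem~\ref{th:differential}(3) gives a differential $\eta_0$ with simple poles and residue $1$ at each $P_i$, and
\[C_{\mL}(D,G)^\perp = C_{\mL}(D,H), \qquad H=D-G+(\eta_0).\]
Next I need to absorb the multiplier $x$ into a divisor. I would try to find $u\in F$ with $u(P_i)=x_i$ for all $i$; strong approximation allows this, with the zeros and poles of $u$ kept away from $D$. Then $x\cdot C_{\mL}(D,G)=C_{\mL}(D,G-(u))$, because $f\mapsto uf$ maps $\mL(G)$ onto $\mL(G-(u))$ and evaluation at $P_i$ multiplies by $x_i$.

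This gives $C_{\mL}(D,H)=C_{\mL}(D,G-(u))$, with $\deg H=\deg D-\deg G+2g-2$. The dimension count $\dim C+\dim C^\perp=\dim\mc V$ forces $\ell(G)+\ell(H)=\deg D$, since both evaluation maps are injective when $\deg<\deg D$. By Riemann--Roch, with $2g-2<\deg G$, this yields $\deg H=\deg G$, which is exactly $\deg G=\tfrac12(\deg D+2g-2)$. The hypotheses of Theorem~\ref{th:equality} then hold: the code is neither $0$ nor $\mc V$ because of the degree bounds.

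The main obstacle is the case analysis in Theorem~\ref{th:equality}, which I would run over cases (a)--(e) applied to $H$ and $G-(u)$.
\begin{itemize}
\item[(a)] $H\sim G-(u)\sim G$ gives $D-2G+(\eta_0)\sim 0$, so $2G-D$ is canonical.
\item[(b)] $H-Q$ and $G-P$ are both canonical, so $\deg G=2g-1$. Then $\deg D=2g+\deg D-\deg G\ldots$ by the degree formula $\deg D=2\deg G-2g+2=2g$, contradicting $\deg D>2g+2$.
\item[(c)] $\deg G=\deg D-1$ forces $\deg D=2g$, again a contradiction.
\item[(d), (e)] These similarly force $\deg G=2g-1$ or $\deg G=\deg D-1$, so they are excluded in the same way.
\end{itemize}
Hence only (a) survives, and $2G-D$ is canonical; the degree formula follows by taking degrees.

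For (c), with $g=0$ the forward direction is (b). For the converse, if $\deg G=\tfrac12(\deg D-2)$, then $2G-D$ has degree $-2$ and is therefore canonical on the rational function field. Writing $2G-D=(\eta)$, the remaining step is to arrange $v_{P_i}((\eta))=-1$, which holds automatically because $G$ is disjoint from $D$. Part (a) then applies.
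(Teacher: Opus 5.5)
Your proposal is correct and follows the paper's proof essentially step for step. Part (a) is \cite[Theorem 6]{DM}, which you reprove from Theorem~\ref{th:differential}(1),(2). Part (b) absorbs the multiplier by approximation, uses Theorem~\ref{th:differential}(3) and Theorem~\ref{th:equality}, and excludes cases (b)--(e) because each forces $\deg D=2g$. Part (c) follows from (a) and (b). Applying Theorem~\ref{th:equality} to $(H,G-(u))$ rather than the paper's $(G,H+(h))$ is only cosmetic.

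One justification needs fixing. The identity $\ell(G)+\ell(H)=\deg D$ holds for every such code, so it does not by itself give $\deg H=\deg G$. The needed input is $\ell(H)=\ell(G)$, which comes from $\dim C^\perp=\dim(x\cdot C)=\dim C$, or equivalently from $C_{\mL}(D,H)=C_{\mL}(D,G-(u))$. The resulting $\dim C=\tfrac12\deg D$ is also what guarantees the code is neither $\{\mathbf 0\}$ nor $\mc V$ when $g=0$, where the degree bounds alone do not.
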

\begin{proof}
\begin{itemize}
    \item[(a)]  This is precisely \cite[Theorem 6]{DM}.\\

    \item[(b)] Let $C_{\mL}(D, G)$ be iso-dual. In other words, there exists $a=(a_1, \ldots, a_s) \in \mc{V}$ with each coordinate nonzero, such that 
    \begin{equation}\label{eq:iso-dual}
        C_{\mL}(D, G)^\perp = a \cdot C_{\mL}(D, G).
    \end{equation}
 Following the proof of \cite[Theorem 5.5]{MP}, we want to find $h \in F$ such that $$h(P_i)=a_i$$ for all $i=1, \ldots, s$.\par
 Observe that, for each $i \in \{1, \ldots, s\}$, we have a lift $\tilde{a_i} \in \mc{O}_{P_i} \subset F$ of $a_i \in F_{P_i}$. Then by the Weak Approximation Theorem \cite[Theorem $1.3.1$]{St}, there exists $h \in F$ such that $$v_{P_i}(h-\tilde{a_i})=1$$ for all $i=1, \ldots, s$. For this choice of $h \in F$, we have $h-\tilde{a_i} \in P_i$ for all $i=1, \ldots, s$, which in turn implies that $$h(P_i)=\tilde{a_i}(P_i)=a_i$$ for all $i=1, \ldots, s$.

  On the other hand, by Theorem \ref{th:differential}, we know that 
    \begin{equation}\label{eq:perp_and_diff}
        \begin{split}
            C_{\mL}(D, G)^\perp&= C_{\Omega}(D,G) \\
            &=C_{\mL}(D, D-G+(\eta)),
        \end{split}
    \end{equation}
  where $\eta$ is a differential of $F$ with $v_{P_i}((\eta))=-1$ and $\operatorname{res}_{P_i}(\eta)=1$ for all $i=1, \ldots, s$. Let us denote the divisor $D-G+(\eta)$ by $G^{\perp}$.  
 Thus, by \eqref{eq:iso-dual} and \eqref{eq:perp_and_diff}, we get that 
 \[a \cdot C_{\mL}(D, G)= C_{\mL}(D, G)^\perp= C_{\mL}(D,G^\perp)=a \cdot C_{\mL}(D,G^\perp + (h)) . \] In other words, denoting the divisor $G^\perp + (h)$ by $H$, we have
 \begin{equation} \label{eq_equal}
      C_{\mL}(D, G)=  C_{\mL}(D,H) . 
 \end{equation}  
 Given the code $C_{\mL}(D, G)$ is iso-dual, we have that 
 \begin{equation}\label{eq:dim}
      \dim C_{\mL}(D, G)= \frac{1}{2}\deg D. 
 \end{equation} In particular, $C_{\mL}(D, G)$ is nontrivial.
Moreover, the kernel of the evaluation map in \eqref{GGG_1} is $\mL(G-D)$. Since $\deg G < \deg D$, the evaluation map is injective. Hence,
\begin{equation}\label{eq:global}
       \dim C_{\mL}(D, G)=l(G)=\deg G +1-g. 
 \end{equation} Then \eqref{eq:dim} and \eqref{eq:global} implies that
 \begin{equation}\label{eq:deg}
     \begin{split}
        \deg H & =\deg D-\deg G+\deg ((\eta))\\
        &=2 \deg G+2-2g-\deg G +2g-2\\
        &=\deg G. 
     \end{split}
 \end{equation}
 We have the assumptions $2g-2 <\deg G < \deg D$ and $\deg D>2g+2$. By \eqref{eq_equal} and \eqref{eq:deg}, we can apply Theorem \ref{th:equality}. Thus, one of the five situations given in the statement of Theorem \ref{th:equality} may occur. We rule out every possibility other than the one that $G$ and $H$ are equivalent with respect to $D$. To see this, note that in the case of (b) or (d) of Theorem \ref{th:equality}, $\deg G=2g-1$. Then by Riemann-Roch theorem, we get $\deg D=2g$, a contradiction. In the case of (c) or (e), we have $\deg G=\deg D-1$. Again, using Riemann-Roch theorem, we get $\deg D=2g$. Thus, the only possibility is when $G$ and $H$ are equivalent with respect to $D$. In other words, there is $u \in F$ such that $u(P_i)=1$ for all $i=1, \ldots, s$ and 
 \begin{equation*}
     \begin{split}
         G+(u)= H = G^\perp + (h) & =D-G +(\eta) +(h)\\ \Rightarrow  (\eta)+(hu^{-1})&= 2G-D\\
         \Rightarrow (\omega)&=2G-D, 
     \end{split}
 \end{equation*}
where $\omega=hu^{-1} \eta$ is a canonical divisor. This proves the converse direction.\\

Moreover, observe that, in this case, we have
\begin{equation*}
    \begin{split}
       \deg(2G-D) &=\deg (\omega)= 2g-2\\
       \Rightarrow \, \deg G&= \frac{1}{2}(\deg D+2g-2).
    \end{split}
\end{equation*}

\item[(c)] If the code $ C_{\mL}(D, G)$ is iso-dual, then by (b), it follows that $\deg G= \frac{1}{2}(\deg D-2)$, as $g=0$ for a rational function field.

Conversely, if $\deg G= \frac{1}{2}(\deg D-2)$, then $\deg (2G-D)=2g-2=-2<0$, hence we know that $l(2G-D)=0$. Thus, $2G-D$ is a canonical divisor. Then we are done by (a).
\end{itemize}
   
\end{proof}

\begin{rmk}
     The converse direction in Theorem \ref{th:iso-dual} is stronger than \cite[Theorem 3.3]{isodualgag}, but we have restrictions on the degree of the associated divisors. Note that part (c) of Theorem \ref{th:iso-dual} coincides with \cite[Theorem 3.4]{isodualgag}, but our proof is a consequence of parts (a) and (b) of Theorem \ref{th:iso-dual}.
\end{rmk}

As an immediate corollary, we have the following criterion for self-duality of generalized geometric Goppa codes.
\begin{corollary}
   Let $C_{\mL}(D, G)$ be a generalized geometric Goppa code defined over a function field $F$ of genus $g$. Assume that $2g-2< \deg G < \deg D$ and $\deg D > 2g+2$.  Then  $C_{\mL}(D, G)$ is self-dual if and only if there exists a Weil differential $\eta$ such that $(\eta)= 2G-D$ with $v_{P_i}((\eta))=-1$ and $res_{P_i}(\eta)=1$ for $1 \leq i \leq s$.
\end{corollary}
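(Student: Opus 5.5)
The corollary splits into two directions. The \emph{if} direction will follow from Theorem \ref{th:iso-dual}(a); the \emph{only if} direction will come from rerunning the proof of Theorem \ref{th:iso-dual}(b) with $a=(1,\dots,1)$.

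For the \emph{if} direction, suppose $\eta$ satisfies $(\eta)=2G-D$, $v_{P_i}((\eta))=-1$ and $\operatorname{res}_{P_i}(\eta)=1$ for all $i$. Theorem \ref{th:iso-dual}(a) gives $C_{\mL}(D,G)^\perp=x\cdot C_{\mL}(D,G)$ with $x=(\operatorname{res}_{P_i}(\eta))_i=(1,\dots,1)$. Multiplication by the all-ones vector is the identity, so $C_{\mL}(D,G)^\perp=C_{\mL}(D,G)$ and the code is self-dual. One could also get this directly from Theorem \ref{th:differential}(1),(2): there $D-G+(\eta)=G$.

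For the \emph{only if} direction, self-duality is iso-duality with $a=(1,\dots,1)$, and I will rerun the proof of Theorem \ref{th:iso-dual}(b) with this $a$.
\begin{enumerate}
\item Choose $h=1$, which satisfies $h(P_i)=a_i=1$ with no approximation argument needed.
\item Take $\eta_0$ from Theorem \ref{th:differential}(3), with $v_{P_i}((\eta_0))=-1$ and $\operatorname{res}_{P_i}(\eta_0)=1$. Then $C_{\mL}(D,G)=C_{\mL}(D,G)^\perp=C_{\mL}(D,H)$, where $H=D-G+(\eta_0)$.
\item As in that proof, $\dim C_{\mL}(D,G)=\frac12\deg D$, which gives $\deg H=\deg G$.
\item The degree hypotheses exclude cases (b)--(e) of Theorem \ref{th:equality}, so $G\sim_D H$.
\end{enumerate}
Hence there is $u\in F$ with $u(P_i)=1$ for all $i$ and $G+(u)=D-G+(\eta_0)$. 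Setting $\eta=u^{-1}\eta_0$ gives $(\eta)=(\eta_0)-(u)=2G-D$.

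The step needing the most care is checking the local conditions on $\eta$. Since $G$ has support disjoint from $D$, $v_{P_i}(2G-D)=-1$, so $v_{P_i}((\eta))=-1$. For the residue, I will use that $u$ is a unit at $P_i$ with $u(P_i)=1$, hence $u^{-1}\in\mO_{P_i}$ with $u^{-1}(P_i)=1$. For a differential with a simple pole at $P_i$ and $f\in\mO_{P_i}$, one has $\operatorname{res}_{P_i}(f\eta_0)=f(P_i)\operatorname{res}_{P_i}(\eta_0)$ when $\deg P_i=1$. For places of higher degree this must be taken in the sense of the $F_{P_i}$-valued residue used in \cite{DM}, which is $F_{P_i}$-linear in $f(P_i)$ for simple poles (I would verify this via a local expansion in a prime element $t$, writing $\eta_0=(c\,t^{-1}+\dots)dt$ locally). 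This gives $\operatorname{res}_{P_i}(\eta)=1\cdot 1=1$. If that linearity is not directly available, I would instead argue using Theorem \ref{th:differential}(2) with $\eta$: $C_{\mL}(D,G)^\perp=a'\cdot C_{\mL}(D,G)$ with $a'=(\operatorname{res}_{P_i}(\eta))$. Combined with self-duality this gives $a'\cdot C=C$, and I would then need $a'=(1,\dots,1)$, which is less immediate. So the residue-linearity route is the one I would pursue.
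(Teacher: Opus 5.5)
Your proof is correct and follows the route the paper intends. The paper states the corollary as immediate from Theorem \ref{th:iso-dual}: the \emph{if} direction is part (a) with $x=(1,\dots,1)$, and the \emph{only if} direction is the proof of part (b) specialized to $a=(1,\dots,1)$, where your choice $h=1$ is a harmless simplification. Your check that $\operatorname{res}_{P_i}(u^{-1}\eta_0)=u^{-1}(P_i)\operatorname{res}_{P_i}(\eta_0)=1$, via $F_{P_i}$-linearity of the residue at simple poles, is the same fact that underlies Theorem \ref{th:differential}(2), and it supplies a step the paper leaves implicit. You were right to drop the fallback route through $a'\cdot C=C$, since that relation does not force $a'=(1,\dots,1)$.
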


\section{Lifting iso-dual GGG codes on function field extensions}
In this section, we lift iso-dual generalized geometric Goppa codes on function field extensions following \cite{CPQT}.\par
 Let $M/F$ be a finite separable extension of function fields over $\F_q$ of degree $m \geq 2$ with genera $g_M$ and $g_F$ respectively. Let $\{P_1, \ldots, P_s\}$ and $\{Q_1, \ldots, Q_r\}$ are disjoint set of places of $F$. Let $(\beta_1, \ldots, \beta_r) \in \mathbb{Z}^r$ be a non-zero $r$-tuple and consider the divisors $D= P_1 + \cdots P_s$, \, $G= \beta_1Q_1 + \cdots +\beta_r Q_r$ of $F$. Recall that, for each $i=1, \ldots, s$, $\mO_{P_i} \subset F$ denotes the valuation ring of $P_i$ and $F_{P_i}:=\mathcal{O}_{P_i}/P_i$ denotes the associated residue class field. Recall that $\mc{V}:=\prod_{i=1}^s F_{P_i}$ is a $\F_q$-vector space of dimension $\deg D= \sum k_i$, where $k_i$ is the degree of each $P_i$. Recall the generalized geometric Goppa code $C_{\mL}(D,G)$ is the image of the evaluation map $\text{ev}:\mL(G) \rar \mc{V},~ x \mapsto (x(P_i))_{i=1}^s$ defined in \eqref{GGG_1}.

 Suppose that the following condition holds:
    \begin{enumerate}[label=(\alph*)]
        \item $P_i$ splits completely in $M/F$ for $i=1, \ldots,s$

        \item $M/F$ is unramified outside $\textrm{Supp}(G)$

        \item for each $Q \in \textrm{Supp}(G)$, the different exponent $d(S|Q)$ is even for all $S$ lying over $Q$.
    \end{enumerate}

For each $i=1, \ldots, s$, since $P_i$ splits completely, there are exactly $m=[M:F]$ distinct places of $M$ lying over $P_i$. Let $R_{i,1}, \ldots, R_{i,m}$ be all distinct places of $M$ lying over $P_i$. Observe that $e(R_{i,j}|P_i)=1$ for all $j=1, \ldots, m$, as $P_i \notin \textrm{Supp}~(G)$. Let $\mO_{R_{i,j}} \subset M$ denote the valuation ring of $R_{i,j}$ and $M_{R_{i,j}}:=\mathcal{O}_{R_{i,j}}/R_{i,j}$ denote the associated residue class field. By the Fundamental Equality (see \cite[Theorem 3.1.11]{St}), it follows that $M_{R_{i,j}}=F_{P_i}$ for all $i=1, \ldots, s, \, j=1, \ldots, m$. Thus, we also get that $\deg R_{i,j}=\deg P_i$. We now consider the $\F_q$-vector space $$\mc{W}:=\prod_{\substack{1 \leq i \leq s\\ 1 \leq j \leq m }} M_{R_{i,j}}$$ of dimension $m \sum k_i$.
Condition (c) above implies that $\widehat{G}= \Con_{M/F}(G) + \frac{1}{2}{\rm Diff}(M/F)$ is a well-defined divisor of $M$ (see \eqref{eq:different}). Consider the divisor $\t{D}=\Con_{M/F}(D)$ of $M$. Then we define the lift of the generalized geometric Goppa code $C_{\mL}(D,G)$ to be the generalized geometric code $C_{\mL}(\t{D},\widehat{G})$, which is the image of the evaluation map
\begin{equation*}
     \text{ev}:\mL(\widehat{G}) \rar \mc{W},~
     x \mapsto (x(R_{i,j}))_{\substack{1 \leq i \leq s\\ 1 \leq j \leq m }}.
   \end{equation*}

The following theorem shows that, under certain assumptions on the degree of divisors, the lift of an iso-dual generalized geometric Goppa code is also iso-dual, generalizing \cite[Theorem 4.1]{CPQT}.

\begin{theorem}\label{th:lifted}
   Let $M/F$ be a finite separable extension of function fields over $\F_q$ of degree $m \geq 2$ with genera $g_M$ and $g_F$ respectively. Let $\{P_1, \ldots, P_s\}$ and $\{Q_1, \ldots, Q_r\}$ be disjoint set of places of $F$ such that 
    \begin{enumerate}[label=(\alph*)]
        \item $P_i$ splits completely in $M/F$ for $i=1, \ldots,s$

        \item $M/F$ is unramified outside $\textrm{Supp}(G)$

        \item for each $Q \in \textrm{Supp}(G)$, the different exponent $d(S|Q)$ is even for all $S$ lying over $Q$.
    \end{enumerate}
     Let $(\beta_1, \ldots, \beta_r) \in \mathbb{Z}^r$ be a non-zero $r$-tuple and consider the divisors $D= P_1 + \cdots P_s$, \, $G= \beta_1Q_1 + \cdots +\beta_r Q_r$ of $F$.
  Let $$\deg G> g_F-1+\frac{g_M-1}{m}$$ and $$\deg D> \max\{2g_F+2,\frac{2 g_M+2}{m},\deg G-g_F+1+\frac{g_M-1}{m}\}.$$ Furthermore, consider the divisors $\widehat{G}= \Con_{M/F}(G) + \frac{1}{2}{\rm Diff}(M/F)$ and $\t{D}=\Con_{M/F}(D)$ of $M$. If the generalized geometric Goppa code $C_\mL(D,G)$ is iso-dual, then so is $C_\mL(\t{D}, \widehat{G})$.
\end{theorem}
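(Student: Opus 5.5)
We use Theorem \ref{th:iso-dual}(b) on $F$ to get a differential, pull it back to $M$ with the cotrace, and then apply Theorem \ref{th:iso-dual}(a) on $M$.

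\emph{Step 1 (descent to a differential on $F$).} First we check that the hypotheses of Theorem \ref{th:iso-dual} hold for $C_\mL(D,G)$ over $F$. We have $\deg D>2g_F+2$ by assumption. The bound $\deg G > g_F-1+\frac{g_M-1}{m}$ gives $\deg G>2g_F-2$, since Riemann--Hurwitz gives $g_M-1\geq m(g_F-1)$. The condition $\deg G<\deg D$ follows from $\deg D>\deg G-g_F+1+\frac{g_M-1}{m}$, since $\frac{g_M-1}{m}\ge g_F-1$. Theorem \ref{th:iso-dual}(b) then gives a Weil differential $\omega$ of $F$ with $(\omega)=2G-D$. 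Since $G$ is supported away from $D$, $v_{P_i}((\omega))=-1$ for every $i$.

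\emph{Step 2 (lift to $M$).} Set $\widehat\omega=\textrm{Cotr}_{M/F}(\omega)$. By \eqref{eq:cotrace},
\[
(\widehat\omega)=\Con_{M/F}(2G-D)+\textrm{Diff}(M/F)=2\widehat G-\t D.
\]
By condition (b), $\textrm{Diff}(M/F)$ is supported on places over $\textrm{Supp}(G)$. Each $R_{i,j}$ is unramified over $P_i$ and does not lie over $\textrm{Supp}(G)$, so $v_{R_{i,j}}((\widehat\omega))=-1$. Also $\textrm{Supp}(\widehat G)\cap\textrm{Supp}(\t D)=\emptyset$.

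\emph{Step 3 (degree conditions on $M$ and conclusion).} It remains to verify $2g_M-2<\deg\widehat G<\deg\t D$ and $\deg\t D>2g_M+2$. Then Theorem \ref{th:iso-dual}(a), applied over $M$ with $\eta=\widehat\omega$, gives $C_\mL(\t D,\widehat G)^\perp=x\cdot C_\mL(\t D,\widehat G)$ with $x=(\operatorname{res}_{R_{i,j}}(\widehat\omega))_{i,j}$. The coordinates of $x$ are nonzero because the pole order at each $R_{i,j}$ is exactly one.

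For the degrees we use $\deg\t D=m\deg D$ and, by Riemann--Hurwitz, $\deg\textrm{Diff}(M/F)=2g_M-2-m(2g_F-2)$. Hence
\[
\deg\widehat G=m\deg G+g_M-1-m(g_F-1).
\]
\begin{itemize}
\item $\deg\widehat G>2g_M-2$ is equivalent to $\deg G>g_F-1+\frac{g_M-1}{m}$, which is assumed.
\item $\deg\t D>2g_M+2$ is equivalent to $\deg D>\frac{2g_M+2}{m}$, which is assumed.
\item $\deg\widehat G<\deg\t D$ is equivalent to $\deg D>\deg G-g_F+1+\frac{g_M-1}{m}$, which is assumed.
\end{itemize}
The real content is Step 1, and it is supplied by Theorem \ref{th:iso-dual}(b). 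The step needing most care is the bookkeeping in Steps 2--3: using conditions (a)--(c) to control the valuations of $(\widehat\omega)$ at the $R_{i,j}$, and using Riemann--Hurwitz to turn the stated degree hypotheses into exactly the hypotheses of Theorem \ref{th:iso-dual} on $M$.
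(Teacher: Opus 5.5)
Your proposal is correct and follows the paper's proof essentially step for step. You use Riemann--Hurwitz (giving $g_M-1\ge m(g_F-1)$ and $\deg\widehat{G}=m\deg G+g_M-1-m(g_F-1)$) to check the degree hypotheses of Theorem~\ref{th:iso-dual} over both $F$ and $M$, apply part (b) to get a differential with divisor $2G-D$, take the cotrace to get divisor $2\widehat{G}-\t{D}$ with simple poles at the $R_{i,j}$, and conclude with part (a) over $M$, exactly as the paper does; the only differences are the order of presentation and your explicit remark that the residues are nonzero, which the paper leaves implicit.
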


\begin{proof}
By the Hurwitz genus formula \cite[Theorem 3.4.13]{St}, we have that 
\begin{equation}\label{eq:deg_different}
\deg {\rm Diff}(M/F)=(2g_M-2)-m(2g_F-2). 
\end{equation}
Since ${\rm Diff}(M/F)\geq 0$, we have that 
$$ g_M-1\geq m(g_F-1).$$
Hence, by hypothesis,
\begin{equation}\label{eq:basefield}
    \deg D> \deg G + \frac{1}{2m}\deg {\rm Diff}(M/F) \geq \deg G >g_F-1+\frac{g_M-1}{m} \geq 2g_F-2.
\end{equation}
For any divisor $A$ of $F$, we know that $\deg \t{A}= m \deg A$, by \eqref{eq:degree}. Thus, by hypothesis, 
\begin{equation}\label{eq: D lowerbound}
    \deg \t{D}= m\deg D>2g_M+2.
\end{equation} 
Moreover, we have
 \begin{equation}\label{eq:upperbound}
    \begin{split}
        \deg \widehat{G}&=m\deg G+\frac{1}{2}\deg{\rm Diff}(M/F)\\
        &= m\deg G+(g_M-1)-m(g_F-1) \ \ (\textrm{ by \eqref{eq:deg_different}})\\
        &=m(\deg G-g_F+1+\frac{g_M-1}{m})<m \deg D=\deg \t{D}.
    \end{split}
\end{equation}
and 
\begin{equation}\label{eq:lowerbound}
    \begin{split}
        \deg \widehat{G}&=m\deg G+\frac{1}{2}\deg{\rm Diff}(M/F)\\
        &= m\deg G + (g_M-1)-m(g_F-1)\\
        &>m(g_F-1)+(2g_M-2)-m(g_F-1)=2g_M-2.
    \end{split}
\end{equation}
Suppose that the generalized geometric Goppa code $C_\mL(D,G)$ is iso-dual. Since the conditions on degrees of $D$ and $G$ are satisfied by the hypothesis and \eqref{eq:basefield}, by Theorem \ref{th:iso-dual}, there is a a Weil differential $\eta$ of $F$ such that $(\eta)=2G-D$ with $v_P((\eta))=-1$ for all $P \in \textrm{Supp}~(D)$. Let $\t{\eta}= {\rm Cotr}\,(\eta)$ be the cotrace of $\eta$ in $M$. Hence, by \eqref{eq:cotrace}, we have
    \begin{equation*}
    \begin{split}
         (\t{\eta})&={\rm Con}_{M/F} ((\eta))+ {\rm Diff} \, (M/F)\\
         &={\rm Con}_{M/F}(2G-D)+ {\rm Diff} \, (M/F)\\
         &={\rm Con}_{M/F}(2G)-{\rm Con}_{M/F}(D)+ {\rm Diff} \, (M/F)\\
         &=2\widehat{G}-\t{D}.
    \end{split}
        \end{equation*}
We have $\textrm{Supp}(G) \, \cap \textrm{Supp}(D)= \emptyset$, which implies that ${\rm Supp} (2\widehat{G}) \, \cap {\rm Supp}(\t{D})= \emptyset.$ Moreover, $d(R_{i,j}|P_i)=e(R_{i,j}|P_i)-1=0$, follows from $(b)$. Thus, \[v_{R_{i,j}}((\t{\eta}))=-1\]
for all $i=1, \ldots, s, j=1, \ldots, m$. Since the conditions of Theorem \ref{th:iso-dual} on degrees of $\t{D}$ and $\widehat{G}$ are satisfied by \eqref{eq: D lowerbound}, \eqref{eq:upperbound} and \eqref{eq:lowerbound}, choosing $$x=(\operatorname{res}_{R_{1,1}}(\t{\eta}), \ldots, \operatorname{res}_{R_{s,m}}(\t{\eta})) \in \mc{W},$$ by Theorem \ref{th:iso-dual} (a), we have that the generalized geometric Goppa code $C_\mL(\t{D}, \widehat{G})$ is iso-dual.

\smallskip


 \end{proof}
As an immediate corollary, we have the following:
 \begin{corollary}\label{cor:parameters}
     Under the assumptions of Theorem \ref{th:lifted}, the lifted iso-dual code $C_\mL(\t{D}, \widehat{G})$ has parameters $[\t{n}, \t{k}, \t{d}]$, where \[\t{n}=ms, \ \t{k}= \frac{m~\deg D}{2},\ \t{d}\geq ms-\frac{1}{2}(m~\deg D-2g_M+2).\].
 \end{corollary}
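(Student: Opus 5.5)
The plan is to read off $\t{n}$ and $\t{k}$ directly from the construction and Theorem \ref{th:iso-dual}. The distance bound needs the Goppa-type zero count combined with the iso-duality of $C_\mL(\t{D},\widehat{G})$. I can only derive the weaker bound with $+2g_M-2$; the stated bound with $-2g_M+2$ is not established here.

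\textbf{Length and dimension.} Since each $P_i$ splits completely in $M/F$, $\t{D}=\sum_{i,j}R_{i,j}$ has exactly $ms$ places, so $\t{n}=ms$. By Theorem \ref{th:lifted}, $C_\mL(\t{D},\widehat{G})$ is iso-dual in $\mc{W}$, and $\dim\mc{W}=m\deg D$. Since $\dim C+\dim C^\perp=\dim\mc{W}$ and $C^\perp=x\cdot C$ has the same dimension as $C$, we get $\t{k}=\tfrac{1}{2}m\deg D$. This can be cross-checked with the degree bounds \eqref{eq:upperbound} and \eqref{eq:lowerbound}. They give $2g_M-2<\deg\widehat{G}<\deg\t{D}$, so the evaluation map is injective and $\t{k}=l(\widehat{G})=\deg\widehat{G}+1-g_M$. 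Theorem \ref{th:iso-dual}(b) gives $\deg\widehat{G}=\tfrac12(m\deg D+2g_M-2)$, which yields the same value.

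\textbf{Zero count.} Take $0\neq f\in\mL(\widehat{G})$ and let $Z\subseteq\{R_{i,j}\}$ be the set of places where $f(R_{i,j})=0$. Then $f\in\mL(\widehat{G}-\sum_{R\in Z}R)$. Because this space is nonzero, $\sum_{R\in Z}\deg R\le\deg\widehat{G}$. Every place has degree at least one, so $|Z|\le\deg\widehat{G}$ and the weight of $\mathrm{ev}(f)$ is at least $ms-\deg\widehat{G}$. The same argument applies to the dual side, using $C_\Omega(\t{D},\widehat{G})$ from Theorem \ref{th:differential}. There a nonzero $\omega\in\Omega(\widehat{G}-\t{D}+\sum_{R\in Z}R)$ forces $\sum_{R\in Z}\deg R\le 2g_M-2+\deg\t{D}-\deg\widehat{G}$. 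Under iso-duality this is again $\deg\widehat{G}$, and since $C^\perp=x\cdot C$ has the same weight distribution as $C$, both routes give the same estimate.

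\textbf{Main obstacle.} Substituting $\deg\widehat{G}$, the zero count gives
\[
\t{d}\ \ge\ ms-\tfrac12\bigl(m\deg D+2g_M-2\bigr),
\]
which is weaker than the stated bound by $2g_M-2$. To obtain the stated form one must show $|Z|\le\deg\widehat{G}-(2g_M-2)$, that is, gain $2g_M-2$ over the trivial count. I would first try playing the $\mL$-description of $C$ against the $\Omega$-description of $C^\perp=x\cdot C$ for a single codeword. If $f$ vanishes on $Z$, then $x\cdot\mathrm{ev}(f)$ lies in $C^\perp$ and is realized by a differential. Comparing divisors via $(\t{\eta})=2\widehat{G}-\t{D}$ might then let Riemann--Roch subtract a canonical degree. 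I do not see this working, because both descriptions produce the same single inequality. If it fails, the statement as written is unproved by this plan: only the $+2g_M-2$ bound is established above.
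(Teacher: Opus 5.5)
Your computation of $\widetilde{n}$ and $\widetilde{k}$ and your zero count are the paper's argument. The paper gets $\widetilde{k}=\frac12\dim\mc{W}$ from iso-duality, takes $\deg\widehat{G}=\frac12(m\deg D+2g_M-2)$ from Theorem \ref{th:iso-dual}(b), and then quotes the Goppa-type bound $\widetilde{d}\geq ms-\deg\widehat{G}$ from \cite[Proposition 1]{GGG}. So its proof ends with $\widetilde{d}\geq ms-\frac12(m\deg D+2g_M-2)$, which is exactly your inequality. The $-2g_M+2$ in the displayed statement is a sign error in the statement, not a gap in your reasoning. You should not look for the extra $2g_M-2$.

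In fact the printed bound is false, so no comparison of the $\mL$- and $\Omega$-descriptions could produce it. If all $P_i$ are rational, $C_\mL(\widetilde{D},\widehat{G})$ is an ordinary $[ms,ms/2]$ code over $\F_q$. The printed bound then reads $\widetilde{d}\geq ms/2+g_M-1$, which exceeds the Singleton bound $ms/2+1$ once $g_M\geq 3$.

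This occurs under all hypotheses of Theorem \ref{th:lifted}. Take $F=\F_9(x)$ and $M=F(y)$ with $y^3+y=x^4$, so $m=3$, $g_M=3$ and $\mathrm{Diff}(M/F)=10Q_\infty$. Every finite rational place of $F$ splits completely, because $y\mapsto y^3+y$ is the trace $\F_9\to\F_3$ and $a^4\in\F_3$ for all $a\in\F_9$. Let $D$ be the sum of eight such places and $G=3P_\infty$. Then:
\begin{itemize}
\item the base code is iso-dual by Theorem \ref{th:iso-dual}(c);
\item the degree conditions hold, since $3>-1/3$ and $8>14/3$;
\item $\widehat{G}=14Q_\infty$.
\end{itemize}
The lifted code is a $[24,12]$ code over $\F_9$. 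The statement claims $\widetilde{d}\geq 14$, while Singleton gives $\widetilde{d}\leq 13$. Your bound gives $\widetilde{d}\geq 10$, which is consistent.

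One refinement is available from your own zero count: since $\deg R_{i,j}=k_i$, you get $|Z|\leq\lfloor\deg\widehat{G}/\min_i k_i\rfloor$. This is what actually justifies the claim $\widetilde{d}\geq 2$ in the paper's second example, where $k_i=2$ and $\deg\widehat{G}=8$. There the printed formula yields $2$ only by coincidence; the correct formula gives $-2$.
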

 \begin{proof}
     By definition, we see that $\t{n}=ms$. Since $C_\mL(\t{D}, \widehat{G})$ is iso-dual, $$\t{k}=\frac{1}{2}\dim \mc{W}=\frac{m~\deg~D}{2}.$$
     Moreover, by Theorem \ref{th:iso-dual}, we have that $\deg \widehat{G}= \frac{1}{2}(\deg \t{D}+2g_M-2)= \frac{1}{2}(m \deg D+2g_M-2)$. Thus, from \cite[Proposition 1]{GGG}, it follows that  $$\t{d}\geq ms-\deg \widehat{G}=ms-\frac{1}{2}(m \deg D+2g_M-2).$$
 \end{proof}

 \subsection{Elementary abelian p-extensions}
 Let $f$ be a polynomial over $\mathbb{F}_{q^r}$, $r \geq 1$, of degree $m'>0$ such that $gcd(q,m')=1$ and let $ 0 \neq \mu \in \mathbb{F}_{q^r}$. Suppose the polynomial $T^q+ \mu T \in \mathbb{F}_{q^r}[T]$ splits completely into linear factors over $K=\mathbb{F}_{q^r}$. A function field of the form $F=K(x,y)$ where \begin{equation}\label{eq_elem}
     y^q+\mu y=f(x),
 \end{equation}
is called an elementary abelian $p$-extension of $K(x)$. From \cite[Proposition 6.4.1]{St}, we have that the extension $F/K(x)$ is of degree $q$ and genus $g=\frac{(q-1)(m'-1)}{2}$. The only pole $P_{\infty}$ of $x$ in $K(x)$ is totally ramified in $F$ and the extension $F/K(x)$ is unramified outside the place $P_{\infty}$. The different exponent $d(Q_{\infty}|P_{\infty})=(q-1)(m'+1)$ is even, where $Q_{\infty}$ is the only place of $F$ lying over $P_{\infty}$.
Now we construct iso-dual codes on elementary abelian $p$-extensions.
\begin{proposition}
    Let $r \in \mathbb{N}$ and $m'>3$. Consider the elementary abelian $p$-extension $F=\mathbb{F}_{q^r}(x,y)$ as in \eqref{eq_elem}. Let $P_1,\dots, P_s$ be $s$ distinct places of $\mathbb{F}_{q^r}(x)$ different from $P_{\infty}$ and $D=\sum_{i=1}^s P_i$ and $G=\frac{1}{2}(deg~D-2)P_{\infty}$. Assume that $\deg~D>\frac{(q-1)(m'-1)+2}{q}$ and is even. Also, assume that for each $1 \leq i \leq s$, $P_i$ splits completely in $F/K(x)$. Then there exists an iso-dual generalized geometric Goppa code over $F$ with parameters $[\tilde{n},\tilde{k}, \tilde{d}]$ where $$\tilde{n}=qs,~\tilde{k}=\frac{q~\deg D}{2}, ~\tilde{d} \geq qs-\frac{(q~\deg D-(q-1)(m'-1)+2)}{2}.$$
\end{proposition}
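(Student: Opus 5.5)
We will obtain the code as a lift, in the sense of Theorem \ref{th:lifted}, of an iso-dual code over the rational function field, and then read off the parameters from Corollary \ref{cor:parameters}. Take the base field $K(x)$ with $K=\mathbb{F}_{q^r}$ and $g_F=0$, the extension field $M=F$ with $m=[F:K(x)]=q\geq 2$, and $g_M=\frac{(q-1)(m'-1)}{2}$. Set $r=1$, $Q_1=P_\infty$ and $\beta_1=\frac12(\deg D-2)$. Since $\deg D$ is even, $\beta_1$ is an integer. We will check that $\beta_1\neq 0$ below. The places $P_i$ are different from $P_\infty$, so $\operatorname{Supp}(D)\cap\operatorname{Supp}(G)=\emptyset$. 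Separability of $F/K(x)$ and hypotheses (a)--(c) of Theorem \ref{th:lifted} then hold as follows. Condition (a) is assumed in the statement. Condition (b) holds because $F/K(x)$ is unramified outside $P_\infty$. Condition (c) holds because $d(Q_\infty|P_\infty)=(q-1)(m'+1)$ is even and $Q_\infty$ is the only place over $P_\infty$.

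Next we show that $C_{\mL}(D,G)$ over $K(x)$ is iso-dual, using Theorem \ref{th:iso-dual}(c). Since $m'>3$ and $q\ge 2$, we have $(q-1)(m'-1)\ge 3(q-1)$. Hence
\[
\frac{(q-1)(m'-1)+2}{q}\;\ge\;\frac{3q-1}{q}\;>\;2 .
\]
Therefore $\deg D>2=2g_F+2$, so in fact $\deg D\ge 4$ because it is even. In particular $\beta_1\geq 1$, so $(\beta_1)$ is nonzero and $\deg G=\frac12(\deg D-2)$ satisfies $-2<\deg G<\deg D$. Theorem \ref{th:iso-dual}(c) then gives that $C_{\mL}(D,G)$ is iso-dual.

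Then we verify the degree hypotheses of Theorem \ref{th:lifted} with $g_F=0$ and $m=q$. Note that $\frac{2g_M+2}{q}=\frac{(q-1)(m'-1)+2}{q}$, so $\deg D>\frac{2g_M+2}{q}$ is exactly the assumption of the proposition. The condition $\deg G>-1+\frac{g_M-1}{q}$ is equivalent to $\deg D>\frac{2(g_M-1)}{q}$, which is weaker than that assumption. The condition $\deg D>\deg G+1+\frac{g_M-1}{q}$ is equivalent to $\frac12\deg D>\frac{g_M-1}{q}$, which is the same inequality again. Finally, $\deg D>2$ was shown above. Theorem \ref{th:lifted} therefore says that $C_{\mL}(\t{D},\widehat G)$ is an iso-dual generalized geometric Goppa code over $F$, where $\widehat G=q\beta_1Q_\infty+\frac12(q-1)(m'+1)Q_\infty$.

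Finally, we apply Corollary \ref{cor:parameters} with $m=q$ and $2g_M=(q-1)(m'-1)$. It gives $\t n=qs$, $\t k=\frac{q\deg D}{2}$, and
\[
\t d\ \ge\ qs-\tfrac12\bigl(q\deg D-2g_M+2\bigr)\ =\ qs-\tfrac12\bigl(q\deg D-(q-1)(m'-1)+2\bigr),
\]
which is exactly the bound in the proposition. The only real obstacle is the bookkeeping of the inequalities: checking that the single assumption $\deg D>\frac{(q-1)(m'-1)+2}{q}$, together with $m'>3$ and the evenness of $\deg D$, implies all of the degree conditions of Theorems \ref{th:iso-dual} and \ref{th:lifted}. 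In particular one has to verify $\deg D>2$ and $G\neq 0$, which is where $m'>3$ is used.
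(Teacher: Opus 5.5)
Your route is the same as the paper's. The paper's proof invokes Theorem \ref{th:iso-dual}(c) over $K(x)$ and declares the hypotheses of Theorem \ref{th:lifted} "easy to check". Your check of those hypotheses is correct: separability, conditions (a)--(c), $\deg D\ge 4$ (so $G\neq 0$), and the three degree inequalities with $g_F=0$, $m=q$. Iso-duality of the lifted code, $\tilde n=qs$ and $\tilde k=\frac{q\deg D}{2}$ are therefore established. (Minor point: don't reuse $r$ for the number of places $Q_j$, since $r$ already fixes $K=\mathbb{F}_{q^r}$.)

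\textbf{The gap is the last step.} You take the distance bound from the \emph{statement} of Corollary \ref{cor:parameters}, but that statement has a sign error. Its proof only gives $\tilde d\ge \tilde n-\deg\widehat G$ via \cite[Proposition 1]{GGG}, and here
\[
\deg\widehat G=\tfrac12\bigl(q\deg D+2g_M-2\bigr)=\tfrac12\bigl(q\deg D+(q-1)(m'-1)-2\bigr).
\]
So what can actually be derived is $\tilde d\ge qs-\frac12\bigl(q\deg D+(q-1)(m'-1)-2\bigr)$. The bound you (and the proposition) claim exceeds this by $2g_M-2>0$, and it is false in general.

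For a counterexample, take $q=3$, $\mu=-1$, $f=x^4$, so $y^3-y=x^4$ with $g_M=3$, over $K=\mathbb{F}_{81}$. Choose four rational places $x=a_i$ with $\operatorname{Tr}_{K/\mathbb{F}_3}(a_i^4)=0$; these split completely, and the Hasse--Weil bound guarantees at least nine such $a$. Then:
\begin{itemize}
\item $\deg D=4>8/3$ and is even, $G=P_\infty$, and $\widehat G=8Q_\infty$;
\item all twelve places over the $a_i$ are rational, so the lifted code is an ordinary $[12,6]$ AG code over $K$, and the Singleton bound forces $\tilde d\le 7$;
\item the claimed bound, however, gives $\tilde d\ge 12-\frac12(12-6+2)=8$.
\end{itemize}

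So no argument can establish the stated lower bound on $\tilde d$. What your proof (and the paper's) actually yields is $\tilde d\ge qs-\deg\widehat G$, i.e. $\tilde d \ge qs-\frac12\bigl(q\deg D+(q-1)(m'-1)-2\bigr)$. In the example this gives $\tilde d\ge 4$.
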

\begin{proof}
  By Theorem \ref{th:iso-dual} (c), $C_\mL(D,G)$ is iso-dual. Then it is easy to check that all conditions of Theorem \ref{th:lifted} hold.  
\end{proof}
 \section{Examples}
In this section, we illustrate our results with several examples. Also see \cite[Section 4]{isodualgag} for interesting examples of iso-dual GAG codes.
 
 \begin{example}
     Consider the rational function field $F=\F_2(x)$. Let $P_1$ denote the rational place of $\F_2(x)$ corresponding to the polynomial $x+1$ and $P_2$ denote the place of degree $3$ of $\F_2(x)$ corresponding to the polynomial $x^3+x+1$. Let us denote by $P_\infty$ the only pole of $x$ in $\F_2(x)$. Let $D=P_1+P_2$ and $G=P_\infty$ be divisors of $F$. Then $\deg D=4$ and $\deg G=1=\frac{1}{2}(\deg D-2)$. Hence, by Theorem \ref{th:iso-dual}(c), we see that the code $C_\mL(D,G)$ is iso-dual.

     Now, consider the elementary abelian $2$-extension $M=\F_2(x,y)$ of $\F_2(x)$, where $y^2+ y=x^3+1$. Degree of the finite separable extension $M/F$ is $2$ and genus of $M$ is $1$. Note that $T^2+T$ factors into distinct linear factors over the residue class field of $P_1$. By \cite[Theorem 3.3.7]{St}, these correspond to two places $R_{1,1}, R_{1,2}$ in $M$ lying over $P_1$. For $P_2$, the residue class field is $\F_8=\F_2(\alpha)$, where $\alpha$ satisfies $\alpha^3+\alpha+1=0$. Then the polynomial $T^2+T+ \alpha=0$ has two roots $\alpha^2, \alpha^2+1$ in $\F_8$. Again using \cite[Theorem 3.3.7]{St}, these correspond to two places $R_{2,1}, R_{2,2}$ lying over $P_2$. Hence, we see that $P_1, P_2$ split completely in $M/F$. Also, we know that $M/F$ is unramified outside $\textrm{Supp}(G)$. Let $Q_\infty$ denote the place of $M$ lying over $P_{\infty}$. Note that $\text{Diff}(M/F)=4Q_\infty$. Moreover, observe that $1=\deg G>g_F-1+\frac{g_M-1}{m}=-1$ and $4=\deg D> \max\{2g_F+2,\frac{2 g_M+2}{m},\deg G-g_F+1+\frac{g_M-1}{m}\}=\max\{2, 2, 5/2\}$. Let us consider the divisors $\widetilde{D}=\text{Con}_{M/F}(D)=R_{1,1}+R_{1,2}+R_{2,1}+R_{2,2}$ and $\widehat{G}=\text{Con}_{M/F}(G)+\frac{1}{2}\text{Diff}(M/F)=4Q_\infty$. Since all the conditions of Theorem \ref{th:lifted} are satisfied, we obtain an iso-dual generalized geometric Goppa code $C_\mL(\widetilde{D}, \widehat{G})$ over $M$. By Corollary \ref{cor:parameters}, it has parameters $[4, 4, \t{d}]$ where $\t{d} \geq 0$.
 \end{example}

 \begin{example}
     Let us consider the rational function field $F=\F_3(x)$. Let $P_1$ denote the place of degree $2$ of $\F_3(x)$ corresponding to the polynomial $x^2+x+2$ and $P_2$ denote the place of degree $2$ of $\F_3(x)$ corresponding to the polynomial $x^2+2x+2$. Let us denote by $P_\infty$ the only pole of $x$ in $\F_3(x)$. Let $D=P_1+P_2$ and $G=P_\infty$ be divisors of $F$. Then $\deg D=4$ and $\deg G=1=\frac{1}{2}(\deg D-2)$. Again, by Theorem \ref{th:iso-dual}(c), we see that the code $C_\mL(D,G)$ is iso-dual.

      We consider the elementary abelian $3$-extension $M=\F_3(x,y)$ of $F$, where $y^3- y=x^4+1$. Degree of the finite separable extension $M/F$ is $3$ and genus of $M$ is $3$.  The residue class field of $P_1$ is $\F_9=\F_3(\alpha)$, where $\alpha$ satisfies $\alpha^2+\alpha+2=0$. Note that $T^3-T$ factors into distinct linear factors over the residue class field of $P_1$. By \cite[Theorem 3.3.7]{St}, these correspond to three places $R_{1,1}, R_{1,2}$ and $R_{1,3}$ in $M$ lying over $P_1$. For $P_2$, the residue class field is $\F_9=\F_3(\beta)$, where $\beta$ satisfies $\beta^2+2\beta+2=0$. Then the polynomial $T^3-T$ has three distinct roots in $\F_9$. Again using \cite[Theorem 3.3.7]{St}, these correspond to three places $R_{2,1}, R_{2,2}, R_{2,3}$ lying over $P_2$. Hence, we see that $P_1, P_2$ split completely in $M/F$. Also, we know that $M/F$ is unramified outside $\textrm{Supp}(G)$. Let $Q_\infty$ be the place of $M$ lying over $P_{\infty}$. Note that $\text{Diff}(M/F)=10Q_\infty$.  Again, note that $1=\deg G>g_F-1+\frac{g_M-1}{m}=-1/3$ and $4=\deg D> \max\{2g_F+2,\frac{2 g_M+2}{m},\deg G-g_F+1+\frac{g_M-1}{m}\}=\max\{2, 8/3, 8/3\}$. Following Theorem \ref{th:lifted}, let us consider the divisors $\widetilde{D}=\text{Con}_{M/F}(D)=R_{1,1}+R_{1,2}+R_{1,3}+R_{2,1}+R_{2,2}+R_{2,3}$ and $\widehat{G}=\text{Con}_{M/F}(G)+\frac{1}{2}\text{Diff}(M/F)=8Q_\infty$. Hence, the code $C_\mL(\widetilde{D}, \widehat{G})$ is iso-dual generalized geometric Goppa code over $M$. By Corollary \ref{cor:parameters}, it has parameters $[6, 6, \tilde{d}]$, where $\tilde{d} \geq 2$.
 \end{example}

  \begin{example}
     Consider the rational function field $F=\F_2(x)$. Let $P_1$ denote the rational place of $\F_2(x)$ corresponding to the polynomial $x+1$ and $P_2$ denote the place of degree $3$ of $\F_2(x)$ corresponding to the polynomial $x^3+x+1$. Let us denote by $P_\infty$ the only pole of $x$ in $\F_2(x)$. Let $D=P_1+P_2$ and $G=P_\infty$ be divisors of $F$. Then $\deg D=4$ and $\deg G=1=\frac{1}{2}(\deg D-2)$. Hence, by Theorem \ref{th:iso-dual}(c), we see that the code $C_\mL(D,G)$ is iso-dual.

     Now, consider the elementary abelian $2$-extension $M=\F_2(x,y)$ of $F$, where $y^2+ y=(x^3+1)(x^6+x^2+1)$. Degree of the finite separable extension $M/F$ is $2$ and genus of $M$ is $4$. Note that $T^2+T$ factors into distinct linear factors over the residue class field of $P_1$. By \cite[Theorem 3.3.7]{St}, these correspond to two places $R_{1,1}, R_{1,2}$ in $M$ lying over $P_1$. Similarly for $P_2$, using \cite[Theorem 3.3.7]{St}, these correspond to two places $R_{2,1}, R_{2,2}$ lying over $P_2$. Hence, we see that $P_1, P_2$ split completely in $M/F$. Also, we know that $M/F$ is unramified outside $\textrm{Supp}(G)$. Let $Q_\infty$ be the place of $M$ lying over $P_{\infty}$. Note that $\text{Diff}(M/F)=10 Q_\infty$. Following Theorem \ref{th:lifted}, let us consider the divisors $\widetilde{D}=\text{Con}_{M/F}(D)=R_{1,1}+R_{1,2}+R_{2,1}+R_{2,2}$ and $\widetilde{G}=\text{Con}_{M/F}(G)+\frac{1}{2}\text{Diff}(M/F)=7Q_\infty$. Then $\deg~\widetilde{D}=8$ and $deg~\widetilde{G}=7$. This gives $2g_M+2=10 \not < 8=\deg~\widetilde{D}$. So, the conditions on degree of Theorem \ref{th:lifted} does not hold. However, the code $C_{\mathcal{L}}(\widetilde{D}, \widehat{G})$ has basis $\{(1,1,1,1),(1,1,\alpha,\alpha),(1,1,\alpha^2,\alpha^2),(1,1,\alpha+1,\alpha+1)\}$ over $\mathbb{F}_2$, where $\alpha^3+\alpha+1=0$. It is easy to see that the code $C_{\mathcal{L}}(\widehat{D}, \widetilde{G})$ is self-dual.
 \end{example}

 \section{Self-dual Generalized Algebraic-Geometric codes}
 A linear code $C$ over $\mathbb{F}_q$ is called self-dual if $C=C^{\perp}$. In this section, we study self-duality of generalized AG codes.\par
 Recall the inner product \eqref{eq_inn} on $\mathcal{V}$. Under this inner product, we have $$C_{\mathcal{L}}(G,D)^{\perp}=C_{\Omega}(D,G).$$ In \cite{DM}, the authors generalized this inner product for the $\mathbb{F}_q$-vector space $\mathcal{C}=\prod_{i=1}^s C_i$ as follows:\par
  For each $i=1,\dots,s$, fix an almost self-dual $\mathbb{F}_q$-basis $B_i=\{u_1^{(i)},u_2^{(i)},\dots, u_{k_i}^{(i)}\}$ of $F_{P_i}$ and let $t_j^{(i)}:=Tr_{F_{P_i}/\mathbb{F}_q}((u_j^{(i)})^2)$. Then the generalized duality is defined as: for any $a=(a_{1,1},\dots,a_{1,n_1},\dots, a_{s,1}, \dots, a_{s,k_s})$, $b=(b_{1,1},\dots,b_{1,n_1},$ $\dots, b_{s,1}, \dots, b_{s,k_s}) \in \mathcal{C}$, we define
  $$a \star b=\sum_{i=1}^s \sum_{j=1}^{k_i}t_j^{(i)} a_{i,j}b_{i,j}.$$
   We now recall the construction of GAG codes through differentials \cite{DM}. With the same notations as section $2.2$, we have $D=P_1+\dots+P_s$. Consider the map
   $$\gamma: \Omega(G-D) \xrightarrow{rs} \prod_{i=1}^s \mathcal{O}_{P_i}/P_i \xrightarrow{\prod_{i=1}^s \pi_i} \prod_{i=1}^s C_i,~\omega \mapsto (res_{P_i}(\omega))_{i=1}^s \mapsto (\pi_i(res_{P_i}(\omega)))_{i=1}^s.$$
   Then $C_{\Omega}(P_1,\dots,P_s,G,C_1,\dots, C_s):=\gamma(\Omega(G-D))$ defines an $\mathbb{F}_q$-linear code. This code has the property that with respect to the inner product $\star$ on $\mathcal{C}$ \cite[Theorem 9]{DM} $$C_{\mathcal{L}}(P_1,\dots,P_s,G,C_1,\dots, C_s)^{\perp}=C_{\Omega}(P_1,\dots,P_s,G,C_1,\dots, C_s).$$
Also, $$\dim~C_{\mathcal{L}}(P_1,\dots,P_s,G,C_1,\dots, C_s)+ \dim~C_{\Omega}(P_1,\dots,P_s,G,C_1,\dots, C_s)=\dim~\mathcal{C}=\sum_{i=1}^s k_i.$$
If $C_{\mathcal{L}}(P_1,\dots,P_s,G,C_1,\dots, C_s)$ is self-dual with respect to $\star$, then we call it $\star$-self-dual.\\~\\
 With respect to the fixed basis $B_i=\{u_1^{(i)},u_2^{(i)},\dots, u_{k_i}^{(i)}\}$ of $F_{P_i}$, $i=1,\dots,s$. Any element $\alpha \in F_{P_i}$ can be written as
 $$\alpha=\sum_{j=1}^{k_i} Tr_{F_{P_i}/\mathbb{F}_q} \Bigg( \alpha \frac{u_j^{(i)}}{t_j^{(i)}}\Bigg) u_j^{(i)}.$$
 Thus, for $x \in \mathcal{L}(G)$ and $\omega \in \Omega(G-D)$ we have 
$$x(P_i)=x_1^{(i)}u_1^{(i)}+x_2^{(i)}u_2^{(i)}+\dots+x_{k_i}^{(i)}u_{k_i}^{(i)},$$
 $$res_{P_i}(\omega)=\omega_1^{(i)}u_1^{(i)}+\omega_2^{(i)}u_2^{(i)}+\dots+\omega_{k_i}^{(i)}u_{k_i}^{(i)},$$
 where
 $$x_j^{(i)}=Tr_{F_{P_i}/\mathbb{F}_q} \Bigg(x(P_i)\frac{u_j^{(i)}}{t_j^{(i)}} \Bigg) \text{ and }\omega_j^{(i)}=Tr_{F_{P_i}/\mathbb{F}_q} \Bigg(res_{P_i}(\omega) \frac{u_j^{(i)}}{t_j^{(i)}} \Bigg).$$

 So, \eqref{eq_inn} gives
  \begin{equation} \label{eq_GAG}
      \langle ev(x),rs(\omega) \rangle:=\sum_{i=1}^s \sum_{j=1}^{k_i} t_j^{(i)} x_j^{(i)} \omega_j^{(i)}=0.\\
  \end{equation}
  Let $M_i$ be the generator matrix of $C_i$ in standard form, that is, $M_i=(I_{k_i}~\overline{M}_i)$. Further, let $c_j^{(i)}$ denote the $j$th row of $M_i$. For each $1 \leq i \leq s$ and $1 \leq j \leq k_i$, let $\pi_i(u_j^{(i)})=c_j^{(i)}$. Let $\bar{c}_j^{(i)}$ be the vector consisting of the last $n_i-k_i$ components of $c_j^{(i)}$. Then codewords corresponding to $x$ and $\omega$ are
   $$a=(x_1^{(1)},\dots, x_{k_1}^{(1)},\sum_{j=1}^{k_1} x_j^{(1)} \bar{c}_j^{(1)},\dots,x_1^{(s)},\dots, x_{k_s}^{(s)},\sum_{j=1}^{k_s} x_j^{(s)} \bar{c}_j^{(s)})$$
   and
   $$b=(\omega_1^{(1)},\dots, \omega_{k_1}^{(1)},\sum_{j=1}^{k_1} \omega_j^{(1)} \bar{c}_j^{(1)},\dots,\omega_1^{(s)},\dots, \omega_{k_s}^{(s)}, \sum_{j=1}^{k_s} \omega_j^{(s)} \bar{c}_j^{(s)}).$$

In the following theorem, we prove that under certain conditions self-duality of generalized geometric Goppa code $C_{\mathcal{L}}(D,G)$ implies $\star$-self-duality of generalized AG code  $C_{\mathcal{L}}(P_1,\dots,P_s,G,\hspace{2cm}\\ C_1,\dots, C_s)$ and vice-versa.\\
 \begin{theorem}
     Assume $\deg~G < \sum_{i=1}^s k_i$ and $\sum_{i=1}^s k_i$ is even. Then $C_{\mathcal{L}}(D,G)$ is self-dual if and only if $C_{\mathcal{L}}(P_1,\dots,P_s,G,C_1,\dots, C_s)$ is $\star$-self-dual.
 \end{theorem}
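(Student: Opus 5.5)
The plan is to show that the map $\Pi:=\prod_{i=1}^s\pi_i:\mathcal{V}\to\mathcal{C}$ is an $\mathbb{F}_q$-linear isometry from $(\mathcal{V},\langle\cdot,\cdot\rangle)$ onto its image, where the image carries the form $\star$. Self-duality should then transfer in both directions, because $C_{\mathcal{L}}(P_1,\dots,P_s,G,C_1,\dots,C_s)=\Pi(C_{\mathcal{L}}(D,G))$ by definition of $\alpha=\Pi\circ ev$.

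\emph{Step 1 (isometry).} Take $\alpha=\sum_j \alpha_j u_j^{(i)}$ and $\beta=\sum_l\beta_l u_l^{(i)}$ in $F_{P_i}$. Then
\[
\operatorname{Tr}_{F_{P_i}/\mathbb{F}_q}(\alpha\beta)=\sum_{j,l}\alpha_j\beta_l\operatorname{Tr}(u_j^{(i)}u_l^{(i)})=\sum_j t_j^{(i)}\alpha_j\beta_j,
\]
because an almost self-dual basis satisfies $\operatorname{Tr}(u_j^{(i)}u_l^{(i)})=0$ for $j\neq l$. Since $M_i=(I_{k_i}\ \overline{M}_i)$ is in standard form and $\pi_i(u_j^{(i)})=c_j^{(i)}$, the first $k_i$ coordinates of $\pi_i(\alpha)$ are exactly $\alpha_1,\dots,\alpha_{k_i}$. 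The $\star$-product only involves these information coordinates, weighted by $t_j^{(i)}$. Summing over $i$ gives $\langle x,y\rangle=\Pi(x)\star\Pi(y)$ for all $x,y\in\mathcal{V}$; this is precisely the computation behind \eqref{eq_GAG}. The same formula also shows $t_j^{(i)}\neq 0$, since the trace form is nondegenerate.

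\emph{Step 2 (dimensions).} Because $\deg G<\sum k_i=\deg D$, the kernel $\mathcal{L}(G-D)$ of $ev$ is zero. Each $\pi_i$ is an isomorphism, so both $ev$ and $\alpha$ are injective, and both codes have dimension $l(G)$. The duals are taken inside spaces of the same dimension $\sum k_i$: for $\mathcal{V}$ this is \cite[Proposition 3]{DM}, and for $\mathcal{C}$ it is the dimension formula recalled above together with \cite[Theorem 9]{DM}. Thus in either setting self-duality is equivalent to two conditions holding together: the code is self-orthogonal, and $l(G)=\frac12\sum k_i$. The parity hypothesis on $\sum k_i$ makes the second condition possible.

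\emph{Step 3 (transfer).} Write $C=C_{\mathcal{L}}(D,G)$. By Step 1, $C$ is self-orthogonal for $\langle\cdot,\cdot\rangle$ if and only if $\Pi(C)$ is self-orthogonal for $\star$. The dimension condition $l(G)=\frac12\sum k_i$ is the same on both sides by Step 2. Combining these, $C$ is self-dual if and only if $\Pi(C)$ is $\star$-self-dual.

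I expect the main obstacle to be Step 1. The subtle points are, first, the exact meaning of ``almost self-dual basis'' as used in \cite{DM}, namely orthogonality of distinct basis elements under the trace form, which is what kills the cross terms. Second, one must correctly identify the coordinates of $\pi_i(\alpha)$ with the coefficients $\alpha_j$, using the standard form of $M_i$. Third, one should check that $\star$ is nondegenerate on the relevant space, so that ``dual'' means the same thing on both sides. This last point I would take directly from \cite[Theorem 9]{DM} and its accompanying dimension formula.
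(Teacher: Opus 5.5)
Your proof is correct and follows essentially the paper's route: both codes have dimension $l(G)$ with duals taken in spaces of dimension $\sum_i k_i$, and the coordinate identity \eqref{eq_GAG} shows that $\prod_i \pi_i$ carries $\langle\cdot,\cdot\rangle$ to $\star$. Your version is in fact the more careful one, for two reasons: \begin{itemize}\item[(1)] You apply the identity to pairs of codewords of $C_{\mathcal{L}}(D,G)$ itself, which is what the inclusion $C_{\mathcal{L}}(P_1,\dots,P_s,G,C_1,\dots,C_s)\subseteq C_{\Omega}(P_1,\dots,P_s,G,C_1,\dots,C_s)$ actually needs; the paper pairs $ev(x)$ with $rs(\omega)$, which only re-derives the known duality and never uses the self-duality hypothesis. \item[(2)] Since $\prod_i\pi_i$ is an isometric isomorphism $\mathcal{V}\to\mathcal{C}$, your Step 1 alone gives $\Pi(C)^{\perp_\star}=\Pi(C^{\perp})$, so the degree and parity hypotheses are not really needed.\end{itemize}
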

 \begin{proof}
   It follows from $deg~G < \sum_{i=1}^s k_i=deg~D$ and $\pi$ being an isomorphism that 
   $$\dim C_{\mathcal{L}}(D,G)=k=l(G)=\dim C_{\mathcal{L}}(P_1,\dots,P_s,G,C_1,\dots, C_s).$$
   To prove the theorem let us first suppose that $C_{\mathcal{L}}(D,G)$ is self-dual, then $k=\frac{\sum_{i=1}^s k_i}{2}$. Also,
   \begin{align*}
   \dim C_{\Omega}(P_1,\dots,P_s,G,C_1,\dots, C_s)
    &=\dim C_{\mathcal{L}}(P_1,\dots,P_s,G,C_1,\dots, C_s)^{\perp}\\ &=\sum_{i=1}^sk_i-\dim C_{\mathcal{L}}(P_1,\dots,P_s,G,C_1,\dots, C_s)\\
    &=\sum_{i=1}^s k_i -k=2k-k=k\\
    &=\dim C_{\mathcal{L}}(P_1,\dots,P_s,G,C_1,\dots, C_s).\\
   \end{align*}
   Now it is enough to show that $C_{\mathcal{L}}(P_1,\dots,P_s,G,C_1,\dots, C_s)\subseteq C_{\Omega}(P_1,\dots,P_s,G,C_1,\dots, C_s)$. Let $x \in \mathcal{L}(G)$ and $\omega \in \Omega(G-D)$. 
 Then it follows from \eqref{eq_GAG} that 
 $$a \star b=\sum_{i=1}^s \sum_{j=1}^{k_i}t_j^{(i)} x_{j}^{(i)} \omega_{j}^{(i)}=\langle ev(x),rs(\omega) \rangle=0.$$  This shows that $$C_{\mathcal{L}}(P_1,\dots,P_s,G,C_1,\dots, C_s)\subseteq C_{\Omega}(P_1,\dots,P_s,G,C_1,\dots, C_s)=C_{\mathcal{L}}(P_1,\dots,P_s,G,C_1,\dots, C_s)^{\perp}.$$
 From above, the dimension of the two codes are equal. Hence, the equality follows. \par 
 Converse part follows by the same arguments.
 \end{proof}

\bibliographystyle{alpha}
\bibliography{GAG}
\end{document}